\documentclass[11pt]{amsart}
\usepackage{tikz}
\definecolor{onefill}{RGB}{210,228,250}
\definecolor{twofill}{RGB}{247,236,214}
\usepackage{float}
\usepackage[utf8]{inputenc}
\usepackage[T1]{fontenc}
\usepackage{amsmath,amssymb,amsthm,mathtools}
\usepackage[pagebackref]{hyperref}
\usepackage{microtype}

\usepackage{geometry}
\title[Modularity of point counts on plane curves]{Modularity of Point
Counts for the Curves \(X^a=Y^b\): New Rogers--Ramanujan
Identities}

\author{Kenny Lau and Ken Ono}
\address{Axiom Math, 124 University Avenue, Palo Alto, CA 94301}
\email{kenny@axiommath.ai}
\email{ken@axiommath.ai}
\date{}

\newtheorem{theorem}{Theorem}[section]
\newtheorem{lemma}[theorem]{Lemma}
\newtheorem{proposition}[theorem]{Proposition}
\newtheorem{corollary}[theorem]{Corollary}
\newtheorem*{conjecture}{Conjecture}
\theoremstyle{remark}
\newtheorem{remark}[theorem]{Remark}
\theoremstyle{definition}

\newtheorem*{example}{Example}

\newcommand{\Z}{\mathbb Z}
\newcommand{\R}{\mathbb R}
\newcommand{\Fq}{\mathbb F_q}

\newcommand{\defn}{\textbf}
\newcommand{\qbinom}[2]{\genfrac{[}{]}{0pt}{}{#1}{#2}_{\!q}}
\newcommand{\qbinomQ}[3]{\genfrac{[}{]}{0pt}{}{#1}{#2}_{\!#3}}
\newcommand{\abs}[1]{\left|#1\right|}
\newcommand{\norm}[1]{\left\lVert#1\right\rVert}
\newcommand{\set}[1]{\left\{#1\right\}}
\newcommand{\ind}{\mathbf 1}
\newcommand{\Gap}{\mathcal G}
\DeclareMathOperator{\inv}{inv}
\DeclareMathOperator{\coinv}{coinv}
\DeclareMathOperator{\dist}{dist}
\DeclareMathOperator{\Nilp}{Nilp}

\begin{document}

\begin{abstract}
For coprime \(1<a<b\), let \(M_n^{a,b}(\Fq)\) be the set of commuting
pairs of nilpotent \(n\times n\) matrices over \(\Fq\) with
\(X^a=Y^b\).  Huang, Jiang, and Oblomkov assembled their orders as an Eulerian $q$-series
 \(Z_{a,b}(q)\). They
conjectured that it is an explicit product \(P_{a,b}(q)\) involving Jacobi's
theta function and Dedekind's eta-function, implying the threefold equality
\[
\underbrace{\vphantom{\Bigg|}
\prod_{n\geq1}(1-q^n)\cdot
\Biggl(\,\sum_{n=0}^{\infty}
\frac{\abs{M_n^{a,b}(\Fq)}}
{\abs{\operatorname{GL}_n(\Fq)}}
\Biggr)\Bigg|_{q\mapsto q^{-1}}}_{\text{point count}}
\;=\;
\underbrace{\vphantom{\Bigg|}Z_{a,b}(q)}_{\text{\(q\)-series}}
\;=\;
\underbrace{\vphantom{\Bigg|}P_{a,b}(q)}_{\text{theta quotient}}.
\]
If true, the point count on \(X^a=Y^b\) is essentially a modular
function on \(\Gamma(a+b)\).  The conjecture is layered in \(a\), with an identity for each \(b\).  The
\(a=2\) layer is classical, including identities of Rogers--Ramanujan and
Andrews--Gordon. 
For \(a\geq3\), nothing was known.  We prove the \(a=3\) layer in
full: a new infinite family of Rogers--Ramanujan identities, and a
geometric origin for Warnaar's products.
AxiomProver verified these new identities in Lean assuming existing
literature.
\end{abstract}

\subjclass[2020]{Primary 11P84; Secondary 05A17, 05A30, 05E05,
17B67, 20M14}

\keywords{Rogers--Ramanujan identities, Andrews--Gordon identities,
numerical semigroups}

\maketitle
\section{Introduction and statement of results}

 The Rogers--Ramanujan (RR) identities are among the most celebrated in
mathematics:
\begin{equation}\label{RRIdentities}
\sum_{n=0}^\infty\frac{q^{n^2}}{(q)_n}
=\prod_{n=0}^\infty\frac{1}{(1-q^{5n+1})(1-q^{5n+4})}
\ \ \ \  {\text {\rm and}} \ \ \  
\sum_{n=0}^\infty\frac{q^{n^2+n}}{(q)_n}
=\prod_{n=0}^\infty\frac{1}{(1-q^{5n+2})(1-q^{5n+3})},
\end{equation}
where \((q)_m:=\prod_{j=1}^m(1-q^j)\) is the \defn{\(q\)-Pochhammer
symbol}.  Proved by Rogers in 1894 \cite{Rogers1894} and
rediscovered by Ramanujan and by Schur \cite{Schur1917}, each equates
an Eulerian \(q\)-series with an infinite product, and each counts
partitions: those with parts \(\equiv\pm1\) (resp.\ \(\pm2\)) modulo
\(5\) are equinumerous with those whose parts differ by at least \(2\)
(resp.\ and omit \(1\)).  Beyond partitions, \eqref{RRIdentities}
recur across mathematics and physics (for example, see
\cite{Sills}).

In 1974, Andrews \cite{Andrews74} found a significant
generalization of \eqref{RRIdentities}.  The \defn{Andrews--Gordon (AG) identities} form an
infinite family, one for each \(k\geq1\) and each residue
\(1\leq i\leq k+1\):
\begin{equation}\label{eq:AG}
\sum_{r_1\geq\cdots\geq r_k\geq0}
\frac{q^{r_1^2+\cdots+r_k^2+r_i+\cdots+r_k}}
{(q)_{r_1-r_2}\cdots(q)_{r_{k-1}-r_k}(q)_{r_k}}
=\prod_{\substack{n\geq1\\ n\not\equiv0,\pm i\!\!\pmod{2k+3}}}
\frac{1}{1-q^n}.
\end{equation}
The \(k=1\) case recovers
\eqref{RRIdentities}.  These identities are the analytic counterpart
of  work of Gordon \cite{Gordon61}, and, like RR, they
arise as specialized characters of the affine Kac--Moody
algebra \(A_1^{(1)}\). 
How far the RR/AG phenomenon extends, and what produces it, is a
recurring question.

One answer is due to Griffin, the second author, and Warnaar (GOW)
\cite{GOW}, who embedded \eqref{RRIdentities}--\eqref{eq:AG} into
\emph{doubly-infinite} {\bf GOW} families whose sum sides are
Hall--Littlewood polynomials \cite{Macdonald} at geometric progressions
and whose product sides are characters of the affine Lie algebras
\(A_{2n}^{(2)}\), \(C_n^{(1)}\), \(D_{n+1}^{(2)}\), and
\(A_{n-1}^{(1)}\).  The framework has since been extended by Bartlett
and Warnaar \cite{BartlettWarnaar}, by Rains and Warnaar
\cite{RainsWarnaar} through bounded Littlewood identities for
Macdonald--Koornwinder polynomials, and by Kanade, Russell, Tsuchioka,
and Warnaar \cite{KRTW}, who gave a Lie-algebraic interpretation of
conjectures of Capparelli, Meurman, Primc, and Primc.

The \(A_2\) case has its own history.  Andrews, Schilling, and Warnaar
\cite{ASW} proved an \(A_2\) Bailey lemma and derived the \(A_2\)
Rogers--Ramanujan identities, reproved by Warnaar \cite{WarnaarHL} via
Hall--Littlewood functions.  Cylindric partitions then brought a wave
of activity \cite{CorteelWelsh, CorteelDousseUncu, KanadeRussell}, out
of which Warnaar obtained the \(A_2\) Andrews--Gordon identities
\cite{WarnaarAG}.

What unites this work, from Rogers and Andrews through GOW to the
cylindric-partition program, is that the identities arise
from representation theory and symmetric functions: affine Lie algebra
characters, Hall--Littlewood polynomials, Bailey pairs, Virasoro
modules.  A geometric route of a different kind was opened by Bruschek,
Mourtada, and Schepers \cite{BMS}, who realized the first
Rogers--Ramanujan identity in the Hilbert--Poincar\'e series of the
\emph{arc space} of the double point \(X^2=0\). The viewpoint has since
become a productive program \cite{Afsharijoo, ADJM}.  The mechanism we
take up is geometric in another sense entirely: the identities come
from point counts over finite fields.

For \(a,b>1\) with \(\gcd(a,b)=1\), let
\(\Gamma_{a,b}=\langle a,b\rangle\subset\mathbb N=\{0,1,\ldots\}\) be
the semigroup generated by \(a\) and \(b\), and let
\(\Gap_{a,b}:=\mathbb N\setminus\Gamma_{a,b}\) be its finite \defn{gap
set}.  By Sylvester's theorem the largest gap, the \defn{Frobenius
number}, is \(f_{a,b}:=\max\Gap_{a,b}=ab-a-b\) (see, e.g.,
\cite[Chapter~2]{RamirezAlfonsin}).  With \(a,b\) fixed we write
\(\Gamma\) and \(\Gap\).
Order \(\Gap\) by divisibility in the semigroup: \(i\preceq j\) when
\(j-i\in\Gamma\).  The points
\(\mathcal Y:=\set{(x,y):x,y\in\Z_{\geq1},\ ab-ax-by>0}\) sit below the
anti-diagonal of an \(a\times b\) rectangle, and \(g(x,y):=ab-ax-by\)
carries \(\mathcal Y\) bijectively onto \(\Gap\).  Writing the inverse as
\(i\mapsto(x(i),y(i))\), the semigroup order becomes coordinatewise:
\[
i\preceq j
\quad\Longleftrightarrow\quad
x(i)\geq x(j)
\qquad\text{and}\qquad
y(i)\geq y(j).
\]
So \(\Gap\) is literally a Young diagram in the \(a\times b\)
rectangle, its upward-closed subsets are subdiagrams encoded by
rational Dyck paths, and its maximum is the corner cell \((1,1)\), of
value \(f_{a,b}\).  This picture links the semigroup to \(q,t\)-Catalan combinatorics,
Hilbert schemes on \(X^a=Y^b\), and HOMFLY--PT polynomials of
\((a,b)\)-torus knots \cite{GorskyMazin,ORS2018}.

\begin{example}\label{ex:34-gaps}
Take \((a,b)=(3,4)\): then \(f_{3,4}=5\) and
\(\Gap_{3,4}=\{1,2,5\}\).  The map \(g^{-1}\) sends \(5,1,2\) to
\((1,1),(1,2),(2,1)\), so \(5\) is the maximum and \(1,2\) are
incomparable (Figure~\ref{fig:gap34}).
\begin{figure}[H]
\centering
\begin{tikzpicture}[scale=0.7, line join=round]
  \begin{scope}[gray!45]
    \foreach \x in {0,...,4} \draw[very thin] (\x,0) -- (\x,3);
    \foreach \y in {0,...,3} \draw[very thin] (0,\y) -- (4,\y);
  \end{scope}
  \draw[gray!55, dashed, very thin] (4,0) -- (0,3);  % ab-ax-by=0
  \definecolor{cellfill}{RGB}{225,235,250}
  \foreach \x/\y/\v in {0/0/5, 0/1/1, 1/0/2} {
    \fill[cellfill] (\x,\y) rectangle ++(1,1);
    \draw[thick] (\x,\y) rectangle ++(1,1);
    \node at (\x+0.5,\y+0.5) {$\v$};
  }
  \node[below left, font=\footnotesize] at (0,0) {$(1,1)$};
  \node[font=\footnotesize, gray!70] at (2.9,2.55) {$3\times4$};
\end{tikzpicture}
\caption{The gap set $\Gap_{3,4}=\{1,2,5\}$ in the $3\times4$
rectangle, each cell labeled by $g(x,y)=12-3x-4y$.}
\label{fig:gap34}
\end{figure}
\end{example}

Huang, Jiang, and Oblomkov \cite{HJO} proposed a new source of
Rogers--Ramanujan-type identities, rooted in algebraic geometry rather
than representation theory.  From the enumerative geometry of the plane
curve \(X^a=Y^b\) they produce a \(q\)-series \(Z_{a,b}(q)\),
conjecturally an explicit and essentially modular infinite product with character.  If
true, the RR/AG phenomenon has a geometric origin: the
affine-Lie-algebra identities above are governed by point counts on
these singular curves.

For each \(n\geq1\), they defined the rank \(n\) \(a\times b\) rational
\(q\)-Catalan number, a polynomial with nonnegative integer
coefficients reducing at \(n=1\) to the usual one.  Our interest is its
\(n\to\infty\) limit.  Write \(\ind_E\) for the indicator of a
condition or set \(E\), and put
\begin{displaymath}
U_{a,b}(x):=\ind_{x\geq0}-\ind_{x\geq a}
-\ind_{x\geq b}+\ind_{x\geq a+b}.
\end{displaymath}
Let \(\Z^{\Gap}\) be the integer-valued functions
\(\boldsymbol n=(n_i)_{i\in\Gap}\), extended to \(\Z\) by \(n_i=0\) off
\(\Gap\), and define
\begin{equation}\label{def:quadratic-form}
Q_{a,b}(\boldsymbol n)
  :=\sum_{i,j\in\Gap}U_{a,b}(j-i)n_i n_j.
\end{equation}
The relevant cone is
\begin{displaymath}
\mathcal C_{a,b}:=\set{\boldsymbol n\in\R^{\Gap}:
n_i\geq0,\ n_i\geq n_j\ \text{whenever }i-j\in\Gamma}.
\end{displaymath}
All Pochhammer subscripts below are nonnegative on
\(\mathcal C_{a,b}\), so we may define
\begin{equation}\label{eq:inf-sum-def}
Z_{a,b}(q):=
\sum_{\boldsymbol n\in\Z^{\Gap}\cap\mathcal C_{a,b}}
q^{Q_{a,b}(\boldsymbol n)}
\prod_{i\in\Gap}
\frac{(q)_{n_i-n_{i-a-b}}}
{(q)_{n_i-n_{i-a}}(q)_{n_i-n_{i-b}}},
\end{equation}
with the extension convention supplying the subscripts outside
\(\Gap\).  This agrees with the customary sum over \(\Z^{\Gap}\) under
the convention \(1/(q)_m=0\) for \(m<0\), but avoids multiplying a
vanishing reciprocal by an undefined numerator.
Huang \cite[Theorem~1.3]{Huang2026} proved that \(Q_{a,b}\) satisfies
\[
Q_{a,b}(\boldsymbol n)\geq
\frac{1}{\abs{\Gap}}\norm{\boldsymbol n}_\infty^2
\geq\frac{1}{\abs{\Gap}^2}\norm{\boldsymbol n}_2^2
\qquad(\boldsymbol n\in\mathcal C_{a,b}).
\]
Therefore, \(Z_{a,b}(q)\) is a well-defined formal power series, absolutely
convergent for \(\abs q<1\).

Geometry enters as follows.  Let \(\Nilp_n(\Fq)\) be the nilpotent
\(n\times n\) matrices over \(\Fq\), with \(\Nilp_0(\Fq)\) and
\(\operatorname{GL}_0(\Fq)\) of order \(1\).  For a prime power \(q\), we let
\begin{displaymath}
M_n^{a,b}(\Fq):=\set{(X,Y):X,Y\in\Nilp_n(\Fq),\
XY=YX,\ X^a=Y^b}
\end{displaymath}
and
\[
S_q=\sum_{n=0}^{\infty}
\frac{\abs{M_n^{a,b}(\Fq)}}{\abs{\operatorname{GL}_n(\Fq)}}.
\]
Huang, Jiang, and Oblomkov proved \cite{HJO} that
\begin{equation}\label{eqn:HJO}
S_q=Z_{a,b}(q^{-1})\prod_{n=1}^{\infty}(1-q^{-n})^{-1}<\infty,
\end{equation}
which led them to the normalized point count
\begin{equation}\label{eq:Ppt}
P_{a,b}^{\mathrm{pt}}(q):=
\prod_{n\geq1}(1-q^n)\cdot
\Biggl(\,\sum_{n=0}^{\infty}
\frac{\lvert M_n^{a,b}(\Fq)\rvert}{\lvert\operatorname{GL}_n(\Fq)\rvert}
\Biggr)\Bigg|_{q\mapsto q^{-1}}
=Z_{a,b}(q),
\end{equation}
the second equality obtained by substituting \(q\mapsto q^{-1}\) in
\eqref{eqn:HJO} and then multiplying by \(\prod_{n\geq1}(1-q^n)\).
This is our only use of \eqref{eqn:HJO}.

Huang--Jiang--Oblomkov conjectured that \(Z_{a,b}(q)\) is an explicit
infinite product, giving a doubly-infinite family of
Rogers--Ramanujan type identities.  Define the \defn{\(a,b\)-charge} at
\(i\in\Z\) by
\begin{equation}\label{eq:charge}
r_{a,b}(i):=
\min\{a,b,\dist(ai,(a+b)\Z)\}-1
+\ind_{(a+b)\Z}(i),
\end{equation}
where \(\dist(m,L\Z):=\min_{t\in\Z}\abs{m-Lt}\), and put
\begin{equation}\label{eq:Pab}
P_{a,b}(q):=\prod_{i\geq1}(1-q^i)^{-r_{a,b}(i)}.
\end{equation}

The charge is periodic modulo \(a+b\) and even, since it sees \(i\)
only through \(\dist(ai,(a+b)\Z)\).  It is nonnegative because
\(\gcd(a,a+b)=\gcd(a,b)=1\) forces \(\dist(ai,(a+b)\Z)\geq1\) off the
zero class, where the indicator corrects the \(-1\).  The classes
 \(\pm i\pmod{a+b}\)
carry equal exponents.  
This pairing means that \(P_{a,b}(q)\) can be expressed as a
product assembled from Dedekind's eta-function and Jacobi theta quotients of level \(a+b\).  Hence, with \(q:=e^{2\pi i\tau}\), \(P_{a,b}(q)\)
is a modular function with character on \(\Gamma(a+b)\) up to a rational power of
\(q\) (see, e.g., \cite[Chapters~2--3]{KubertLang}).

\begin{conjecture}[{\bf HJO} \cite{HJO}]
For coprime \(a,b>1\), we have $Z_{a,b}(q)=P_{a,b}(q)$.
\end{conjecture}

The conjecture is best read as indexed by \(a\).  For \(a=1\) the gap
set is empty and the identity is trivial.  For \(a=2\), coprimality
forces \(b=2k+1\), and \(Z_{2,b}(q)\), \(P_{2,b}(q)\) are the sum and
product sides of the Andrews--Gordon identity \eqref{eq:AG} with
modulus \(2k+3\) and \(i=k+1\). That layer is thus established.  At
\(b=3\) it is the first Rogers--Ramanujan identity
\[
Z_{2,3}(q)=\sum_{n\geq0}\frac{q^{n^2}}{(q)_n}
=\prod_{n\geq0}\frac{1}{(1-q^{5n+1})(1-q^{5n+4})}.
\]
Geometrically, this is the HJO identity
for the cubic \(X^2=Y^3\).  So the conjecture proposes that
much of the RR/AG phenomenon flows from the geometry of \(X^a=Y^b\).

%The \(a=3\) layer is the subject of this paper.  A recent preprint
%\cite{HOP} settles \(b=4,5,7,8\) through a finer family of
%\emph{sum-to-sum identities}, the fixed-boundary identities
%\eqref{eq:sum-plus}--\eqref{eq:sum-minus} below, which give
%\(Z_{3,b}=P_{3,b}\). For
%\(b=4,5,7\), these were proved by a delicate \(q\)-Vandermonde
%``split-and-merge'' argument; \(b=8\) required creative telescoping and
%noncommutative Gr\"obner bases handled by significant machine computation. 

The purpose of the present paper is to prove the $a=3$ layer completely.

\begin{theorem}\label{thm:main}
For every integer \(b>3\) coprime to \(3\), the following are true.

\noindent (1) We have that
$$
P_{3,b}^{\mathrm{pt}}(q)=Z_{3,b}(q)=P_{3,b}(q).
$$

\noindent
(2) We have that
\[
\sum_{\boldsymbol n\in\Z^{\Gap}\cap\mathcal C_{3,b}}
q^{Q_{3,b}(\boldsymbol n)}
\prod_{i\in\Gap}
\frac{(q)_{n_i-n_{i-3-b}}}{(q)_{n_i-n_{i-3}}(q)_{n_i-n_{i-b}}}
=\prod_{i\geq1}\bigl(1-q^i\bigr)^{-r_{3,b}(i)}.
\]

\noindent
(3) The common $q$-series in part (1) is a finite product of Jacobi theta
quotients of level \(b+3\).  Up to multiplication by a rational power
of \(q\), it is a modular function with character on \(\Gamma(b+3)\).
\end{theorem}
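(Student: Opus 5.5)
Part (1) follows from part (2) together with \eqref{eq:Ppt}: the first equality \(P_{3,b}^{\mathrm{pt}}=Z_{3,b}\) is already \eqref{eq:Ppt}, and the second is exactly the identity in (2). Part (3) is bookkeeping on the charge. Since \(r_{3,b}\) is even and periodic modulo \(b+3\), the product \(\prod_{i\geq1}(1-q^i)^{-r_{3,b}(i)}\) regroups as follows. Each pair of classes \(\pm j\pmod{b+3}\) contributes \(\bigl((q^j;q^{b+3})_\infty(q^{b+3-j};q^{b+3})_\infty\bigr)^{-r(j)}\), which is a Jacobi theta function divided by \((q^{b+3};q^{b+3})_\infty\). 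The zero class contributes a power of \((q^{b+3};q^{b+3})_\infty\), that is, of an eta quotient. Standard transformation formulas for Klein forms and Siegel functions \cite{KubertLang} then give modularity on \(\Gamma(b+3)\) up to a power of \(q\). So the substance lies entirely in (2).

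For (2), I would write \(b=3k+1\) or \(b=3k+2\) and describe \(\Gap_{3,b}\) explicitly as two rows, \(y=1\) and \(y=2\), of the \(3\times b\) rectangle. The row \(x=1\) has length about \(2b/3\) and the row \(x=2\) has length about \(b/3\). On this two-row Young diagram the cone \(\mathcal C_{3,b}\) becomes a pair of interlacing chains. The quadratic form \(Q_{3,b}\) and the Pochhammer ratios, with numerators \((q)_{n_i-n_{i-3-b}}\), have a banded structure of \(A_2\) type.

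The plan is to match \(Z_{3,b}\) with the sum side of Warnaar's \(A_2\) Andrews--Gordon identities \cite{WarnaarAG}, or of the \(A_2\) Rogers--Ramanujan identities of \cite{ASW} for small \(b\), at modulus \(b+3\). First I would change variables along the two chains, replacing the \(n_i\) by successive differences. Next I would use the \(q\)-binomial theorem and \(q\)-Chu--Vandermonde to absorb the numerator factors \((q)_{n_i-n_{i-3-b}}\), summing out the surplus variables so that the \(a=3\) sum collapses to a genuine \(A_2\) multisum. I would then check that the product side of Warnaar's identity, with the appropriate parameters \((k,i)\), has exponents equal to \(r_{3,b}\).

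The charge computation is direct. Here \(r_{3,b}(i)=\min\{3,\dist(3i,(b+3)\Z)\}-1\) off the zero class and \(r_{3,b}(i)=2\) on it. So the exponents are \(0,1,2\), and this matches the \(A_2\) product shape \((q^{b+3})_\infty^{2}\) divided by triples of theta functions.

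The main obstacle is the sum-side reduction. The HJO summand is indexed by about \(b\) variables with a nonstandard cone, while the \(A_2\) identities use a Bailey-lattice style multisum. If a direct Vandermonde collapse fails, I would first try an induction on \(b\to b+3\). This step adds one cell to the short row and two to the long row. I would aim to realize it as an \(A_2\) Bailey lemma step, in the sense of \cite{ASW}, acting on a Bailey pair attached to \(Z_{3,b}\), and take the base cases \(b=4,5\) from the \(A_2\) Rogers--Ramanujan identities.
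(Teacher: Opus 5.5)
Your reduction of (1) to (2) via \eqref{eq:Ppt} and your treatment of (3) as a residue-class regrouping match the paper. You have also identified the right target: the paper likewise ends at Warnaar's $A_2$ Andrews--Gordon identities \eqref{eq:WarnaarMinus}--\eqref{eq:WarnaarPlus} at modulus $L=b+3$. There is a slip in the charge computation: $r_{3,b}(0)=\min\{3,b,0\}-1+1=0$, not $2$. The product is $(q^L;q^L)_\infty^2/(q;q)_\infty^2$ times three factors $\theta(q^d;q^L)$ in the numerator, and $(q^L;q^L)_\infty^2$ cancels the generic exponent $2$ on the zero class. The genuine gap is (2) itself. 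Your plan to change to differences, absorb the numerators $(q)_{n_i-n_{i-3-b}}$ by $q$-Chu--Vandermonde, and sum out the surplus variables names no actual summation. There is no evident sequence of Vandermonde steps that achieves this uniformly in $b$.

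Here is what actually has to be proved. Fix the $k$ variables $n_{2b-3j}=r_j$ at the top of the longer progression. These become Warnaar's outer variables and carry $\frac{1}{(q)_{r_1}}\prod_i\qbinom{r_i}{r_{i+1}}$ (Lemma~\ref{lem:reassemble}). The remaining $2\ell$ variables ($\ell=k$ or $k-1$) are $a_i$ on the rest of the long progression and $z_i$ on the short one, and they contribute $q^{\sum_i r_i^2}\sum_{\boldsymbol a,\boldsymbol z}q^{E^\pm}B_{\boldsymbol r,s}$. In these coordinates the numerators you want to absorb are not free-standing: they sit inside the Gaussians $\qbinom{r_i-z_{i+1}}{z_i-z_{i+1}}$. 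Moreover, $E^\pm$ couples each $z_i$ to two neighbouring $a$'s, so summing out one variable at a time does not close. One must turn this $2\ell$-fold Laurent polynomial into the $\ell$-fold sum $\sum_{\boldsymbol m}q^{\sum_i(m_i^2-r_im_i)}\qbinom{r_\ell+s}{m_\ell}\prod_{i<\ell}\qbinom{r_i-r_{i+1}+m_{i+1}}{m_i}$ (Theorem~\ref{thm:master}). That target has half as many variables and a different quadratic form, and the identity is needed for both congruence classes.

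The idea you are missing is the paper's: each fixed-$M$ slice equals $q^{sM}\widetilde S_{(M,|\mu|-M),\mu}(q^{-1})$ with $\mu'=(r_1+s,\ldots,r_\ell+s)$. This holds because $B_{\boldsymbol r,s}$ is a generating function over tuples of one-row tableaux, and $sM-\inv(T)=J_a+J_z+E^\pm$. Here $\inv$ is Schilling's inversion statistic, computed for the two orders of the two row blocks. Schilling's order-independence then makes both classes one identity, and his $A_1$ rigged-configuration formula plus $q$-reciprocity produces the $m$-sum.

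Your fallback, an $A_2$ Bailey-lemma induction $b\to b+3$, only relocates the difficulty. It needs an $A_2$ Bailey pair whose $\beta$-side is the HJO summand, which is essentially this same missing identity, and you propose no candidate.
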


\begin{remark}\label{rem:fermionic}
In Section~\ref{sec:proof}, we write \(Z_{3,b}(q)\) as a sum over \(r_1\geq\cdots\geq r_k\geq0\) and \(m_1,m_2,\ldots\geq0\)
obeying inequalities, of a monomial in \(q\) times Gaussian
polynomials \eqref{eq:gaussian-def}; see the displays in the proof of
Theorem~\ref{thm:main} in \S\ref{subsec:product}, which match the sum
sides of Warnaar's identities in Proposition~\ref{prop:warnaar}.
\end{remark}

\begin{remark}[AxiomProver Certificate]
A formal certificate for Theorem~\ref{thm:main} (2-3) can be found in
\begin{center}
\url{https://github.com/AxiomMath/RR_a3}
\end{center}
AxiomProver, an AI system currently under development, was used to generate this certificate. The system verified these results in Lean assuming existing literature (i.e. the results in \cite{Schilling2001, SchillingWarnaar, WarnaarHL, WarnaarAG}).
\end{remark}

\begin{example}
We consider \((a,b)=(3,10)\), where \(f_{3,10}=17\), the period is \(13\), and
$\Gap_{3,10}=\{1,2,4,5,7,8,11,14,17\}$.
Here \eqref{def:quadratic-form} becomes
\[
Q_{3,10}(\boldsymbol n)
=\sum_{i\in\Gap_{3,10}}n_i^2
+n_1n_2+n_2n_4+n_4n_5+n_5n_7+n_7n_8
-n_1n_{11}-n_2n_{14}-n_4n_{14}-n_5n_{17}-n_7n_{17},
\]
and \(\Z^{\Gap}\cap\mathcal C_{3,10}\) is cut out by
\[
n_{17}\geq n_{14}\geq n_{11}\geq n_8\geq n_5\geq n_2\geq0,
\qquad
n_7\geq n_4\geq n_1\geq0,
\]
\[
n_{17}\geq n_7,
\qquad
n_{14}\geq n_4,
\qquad
n_{11}\geq n_1 .
\]
With \(\sum_{\boldsymbol n}\) over these tuples,
Theorem~\ref{thm:main} gives
\begin{displaymath}
\sum_{\boldsymbol n}q^{Q_{3,10}(\boldsymbol n)}\,
\frac{(q)_{n_{11}}\,(q)_{n_{14}-n_{1}}\,(q)_{n_{17}-n_{4}}}
     {(q)_{n_{11}-n_{1}}(q)_{n_{14}-n_{4}}(q)_{n_{17}-n_{7}}}
\prod_{i\in\Gap_{3,10}}\frac{1}{(q)_{n_{i}-n_{i-3}}}
=\frac{(q^{13};q^{13})_{\infty}^{2}}{(q;q)_{\infty}^{2}}\,
\theta(q^{4};q^{13})^{2}\,\theta(q^{5};q^{13}).
\end{displaymath}
\end{example}
\medskip

Answering a question of Y.~Huang, we give a combinatorial interpretation for $Z_{3,b}(q)$.

\begin{corollary}[Tableau model]
\label{cor:tableau-model}
Let \(b>3\) be coprime to \(3\), write \(b=3k\pm1\), and let
\(r_1\geq\cdots\geq r_k\geq0\).  Set
\[
(\ell,s):=(k,r_k)\ \text{ if }b=3k+1,
\qquad
(\ell,s):=(k-1,r_k)\ \text{ if }b=3k-1,
\]
let \(\mu\) be the partition with \(\mu'=(r_1+s,\ldots,r_\ell+s)\), of
size \(\abs\mu=\sum_{i=1}^{\ell}(r_i+s)\), and let \(\mathcal T_M(\mu)\) be the tuples of one-row semistandard
tableaux with entries in \(\{1,2\}\), row lengths the parts of \(\mu\)
in any fixed order, and exactly \(M\) entries equal to \(1\).  Then
\[
Z_{3,b}(q)=
\sum_{r_1\geq\cdots\geq r_k\geq0}
\frac{1}{(q)_{r_1}}
\left(\prod_{i=1}^{k-1}\qbinom{r_i}{r_{i+1}}\right)
\sum_{M\geq0}\ \sum_{T\in\mathcal T_M(\mu)}
q^{\,\sum_{i=1}^{k}r_i^2+sM-n(\mu)+\coinv(T)},
\]
where \(\coinv(T)=n(\mu)-\inv(T)\geq0\) and
\(n(\mu)=\sum_{i=1}^{\ell}\binom{r_i+s}{2}\).  Although \(\inv\)
depends on that order, the inner sum does not, by
Proposition~\ref{prop:tableau-supernomial}.
\end{corollary}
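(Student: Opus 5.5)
The corollary is a combinatorial reading of the fermionic form for \(Z_{3,b}(q)\) described in Remark~\ref{rem:fermionic}. The plan has three steps. First, take the representation obtained in \S\ref{subsec:product}. There \(Z_{3,b}(q)\) is a sum over \(r_1\geq\cdots\geq r_k\geq0\) of
\[
\frac{q^{\sum_i r_i^2}}{(q)_{r_1}}\prod_{i=1}^{k-1}\qbinom{r_i}{r_{i+1}}
\]
times an inner sum over auxiliary variables. That inner sum should be an \(A_2\)-type supernomial, a sum of products of Gaussian polynomials, in the parameters \(r_i+s\) with \(s=r_k\). I would isolate this inner factor by splitting on \(b=3k+1\) versus \(b=3k-1\). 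These two cases account for whether \(\ell=k\) or \(\ell=k-1\) columns carry the shift \(s\), matching the two sum sides of Warnaar's identities in Proposition~\ref{prop:warnaar}.

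Second, identify the inner factor as
\[
\sum_{M\geq0}q^{sM}\sum_{T\in\mathcal T_M(\mu)}q^{\coinv(T)-n(\mu)}.
\]
A one-row semistandard tableau of length \(\mu_j\) with entries in \(\{1,2\}\) is determined by its number of \(1\)'s. The generating function of \(\coinv\) over tuples of such rows with \(M\) total ones is then the \(q\)-supernomial/\(q\)-multinomial coefficient attached to \(\mu\). This is exactly the content of Proposition~\ref{prop:tableau-supernomial}, which I would invoke, and it also gives independence from the ordering of the parts of \(\mu\). The factor \(q^{-n(\mu)}\) with \(n(\mu)=\sum_{i\leq\ell}\binom{r_i+s}{2}\) should absorb the difference between the quadratic exponent produced in \S\ref{subsec:product} and \(\sum r_i^2\). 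I would check this by expanding \(\sum_i\binom{r_i+s}{2}\) and comparing with the exponent in the displayed fermionic sum. The nonnegativity \(\coinv(T)=n(\mu)-\inv(T)\geq0\) holds because \(\inv\) is bounded by \(n(\mu)\), the maximal inversion count for column lengths \(\mu'\).

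Third, substitute the inner identity back into the outer sum and reindex. This gives the displayed formula.

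The main obstacle is the exponent bookkeeping in step two. The linear term \(sM\) and the shift \(-n(\mu)\) must match the fermionic exponent exactly in both parity cases, and in the case \(b=3k-1\) the \(k\)-th variable \(r_k\) enters only through \(s\) and not as a column of \(\mu\). I would first verify the case \(b=4\) (\(k=1\), \(\ell=1\)) and the case \(b=5\) (\(k=2\), \(\ell=1\)) by direct expansion to fix the conventions for \(\inv\), and then carry out the general computation.
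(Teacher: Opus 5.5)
Your outline (a chain factor times an inner factor, identify the inner factor with a tableau generating function, then substitute back) has the right shape. However, the central identification in your second step does not follow from what you invoke, and the check you propose would not establish it.

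Proposition~\ref{prop:tableau-supernomial} only converts the tableau side into a supernomial: $\sum_{T\in\mathcal T_M(\mu)}q^{\coinv(T)}=q^{n(\mu)}\widetilde S_{(M,\abs\mu-M),\mu}(q^{-1})$. It says nothing about the fermionic slice $\sum_{\sum_i m_i=M}q^{\sum_i(m_i^2-r_im_i)}\qbinom{r_\ell+s}{m_\ell}\prod_{i<\ell}\qbinom{r_i-r_{i+1}+m_{i+1}}{m_i}$ that you start from. Bridging the two needs three further ingredients:
\begin{itemize}
\item the rigged-configuration expansion, Proposition~\ref{prop:SchillingA1} with $c_i=r_i+s$;
\item the reciprocity computation of \S\ref{subsec:rigged}, showing $sM-\Phi(\boldsymbol m)-\sum_i m_i(T_i-m_i)=\sum_i(m_i^2-r_im_i)$;
\item content symmetry, to pass from $(\abs\mu-M,M)$ to $(M,\abs\mu-M)$.
\end{itemize}

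Expanding $n(\mu)=\sum_i\binom{r_i+s}{2}$ against the fermionic exponent cannot replace this. Once $\boldsymbol r$ is fixed, $n(\mu)$ is a constant, whereas the discrepancy you want it to absorb, $\sum_i(m_i^2-r_im_i)$, varies with $\boldsymbol m$. Moreover, the two sides are not comparable term by term: one is a sum of Gaussian products over $\boldsymbol m$, the other a sum over tableaux. In fact $q^{-n(\mu)}$ does no matching work at all; it simply cancels against $q^{\coinv(T)}=q^{n(\mu)-\inv(T)}$. Also, the inner object is an $A_1$ supernomial (two-letter content) summed over $M$, not an $A_2$ one; $A_2$ enters only through Warnaar's products.

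The paper never passes to the fermionic side. By Lemma~\ref{lem:Q-reindex}, $S_b(\boldsymbol r)=q^{\sum_i r_i^2}\sum_{\boldsymbol a,\boldsymbol z}q^{E^{\pm}(\boldsymbol a,\boldsymbol z)}B_{\boldsymbol r,s}(\boldsymbol a,\boldsymbol z)$. Splitting by $M=\sum_i(a_i+z_i)$ and applying \eqref{eq:left-coinv} of Proposition~\ref{prop:left-supernomial} gives $q^{sM-n(\mu)}\sum_{T\in\mathcal T_M(\mu)}q^{\coinv(T)}$ directly. That proposition rests on the $J_a,J_z$ bookkeeping, Lemma~\ref{lem:internal}, and order-independence. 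Lemma~\ref{lem:reassemble} then finishes.

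Your route becomes correct once the three ingredients above are inserted, but it is a round trip rather than a shortcut. The fermionic form of $Z_{3,b}$ you start from comes from Theorem~\ref{thm:fixed-boundary}, whose proof already passes through exactly this tableau identity.
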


\noindent
Here \(\inv\) is Schilling's inversion statistic on tuples of one-row
tableaux and \(n(\mu)\) is as in \eqref{eq:supernomial-tilde}. Both are recalled in
\S\ref{subsec:schilling}.

\begin{remark}\label{rem:strata}
The contribution to \(Z_{3,b}(q)\) of each fixed boundary chain
\(\boldsymbol r\), denoted \(S_b(\boldsymbol r)\) in
\S\ref{subsec:product}, is a sum of \(A_1\) supernomials up to explicit
monomials:
\[
S_b(\boldsymbol r)=
\sum_{M\geq0}q^{\,\sum_{i=1}^{k}r_i^2+sM-n(\mu)}
S_{(M,\abs\mu-M),\mu}(q).
\]
The monomial may have a negative exponent: positivity is a property of
the assembled series, not of the individual factors (see
Remark~\ref{rem:laurent}).
\end{remark}

\subsection*{New ideas}

The Rogers--Ramanujan and Andrews--Gordon identities count partitions
whose parts obey difference conditions, and a difference is taken along
a linear order.  Their sum sides record exactly this: the index set is a
single decreasing chain \(r_1\geq\cdots\geq r_k\geq0\), and each factor
\((q)_{r_i-r_{i+1}}\) measures one step of it.  The series
\(Z_{a,b}(q)\) is not of this shape.  Its variables are indexed by the
gap set \(\Gap_{a,b}\) of the numerical semigroup \(\langle a,b\rangle\)
under divisibility, a Young diagram in the \(a\times b\) rectangle,
not a chain, and its factors \((q)_{n_i-n_{i-a}}\),
\((q)_{n_i-n_{i-b}}\), \((q)_{n_i-n_{i-a-b}}\) take differences in two
directions and along the diagonal.  For \(a=3\) the diagram is two
arithmetic progressions, interlacing in one order when
\(b\equiv1\pmod3\) and in the other when \(b\equiv-1\), so the two
residue classes look like separate problems.  Our contribution is a bridge
turning \(Z_{3,b}(q)\) into Warnaar's \(A_2\) Andrews--Gordon series
uniformly in \(b\), by the route

\begin{displaymath}
\begin{split}
Z_{3,b}
\ \xrightarrow[\text{Lemma~\ref{lem:Q-reindex}}]{\text{reindex}}\ &
\text{boundary term}\times\text{finite Laurent polynomial}\\
&\hskip1in \xrightarrow[\text{Thm~\ref{thm:master}}]{\text{supernomial}}\
\text{Warnaar summand}
\ \xrightarrow[\text{Lem~\ref{lem:reassemble},\ \S\ref{subsec:product}}]{\text{reassemble}}\
P_{3,b},
\end{split}
\end{displaymath}
whose arrows rest on the following ideas.

First, we introduce coordinates adapted to the two arithmetic
progressions comprising \(\Gap_{3,b}\).  Fixing the variables on the
longer one leaves two interlacing chains of free variables, in which
each fixed-boundary contribution to \(Z_{3,b}(q)\) becomes a boundary
term times a finite Gaussian-polynomial sum (Lemma~\ref{lem:Q-reindex},
\eqref{eq:Baz}, Lemma~\ref{lem:reassemble}).  This isolates all
\(b\)-dependence, apart from a boundary sum, in a single finite Laurent
polynomial.

Secondly, the heart of the paper: we recognize that finite Laurent
polynomial, for \emph{both} congruence classes at once, as a
fixed-content coefficient of an \(A_1\) completely symmetric
\(q\)-supernomial coefficient in the sense of Schilling and Warnaar.
The two exponents \(E^-\) and \(E^+\) attached to \(b\equiv\pm1\) come
from the \emph{two orders} of the same two blocks of one-row tableaux,
and the order-independence of the multitableau expansion
(Proposition~\ref{prop:tableau-supernomial}) makes the two sums equal.  After
the substitutions and \(q\)-reciprocity of \S\ref{subsec:rigged} and
summation over the fixed-content slices, this produces the inner
fermionic sums of Warnaar's \(A_2\) identities.  The resulting
transformation (Theorem~\ref{thm:master}) is of independent interest
and mentions no numerical semigroups.

Thirdly, we restore the boundary summation, match the result to
Warnaar's \(A_2\) Andrews--Gordon identities, and identify the
resulting theta products with the HJO charge product \(P_{3,b}(q)\) by
a residue-class calculation.

Section~\ref{sec:schilling-warnaar} gives the
Schilling--Warnaar theory of completely symmetric \(q\)-supernomial
coefficients: Kostka--Foulkes polynomials, the multitableau
(inversion) expansion, the rigged-configuration expansion, and
Warnaar's \(A_2\) identities.  A reader familiar with
this material can skim it.  Section~\ref{sec:proof} proves
Theorem~\ref{thm:main}, by reindexing \(Q_{3,b}\), the fixed-boundary
supernomial transformation with auxiliary identities, the reassembly of
\(Z_{3,b}\), and the passage to the product.

\section*{Acknowledgements}\noindent
The authors thank George Andrews, Yifeng Huang, Seewoo Lee,
Peter Paule, and Ashvin Swaminathan for discussions related to this work.  We are especially
grateful to Yifeng Huang for raising the question of a combinatorial
interpretation of the sum side of \(Z_{a,b}(q)\) beyond the classical
case \(a=2\). The tableau model of
Corollary~\ref{cor:tableau-model} is our answer for \(a=3\).

\section{The Schilling--Warnaar theory of \texorpdfstring{$q$}{q}-supernomials}
\label{sec:schilling-warnaar}

We recall the completely symmetric \(q\)-supernomial coefficients of
Schilling and Warnaar, their two combinatorial expansions, and
Warnaar's \(A_2\) Andrews--Gordon identities
\cite{Schilling2001,SchillingWarnaar,WarnaarAG}.
A \defn{partition} \(\lambda=(\lambda_1\geq\lambda_2\geq\cdots\geq0)\)
has finitely many nonzero terms, its \defn{parts}, and finite sum
\(\abs\lambda=\sum_i\lambda_i\).  Its \defn{Young diagram} is the
left-justified array with \(\lambda_i\) cells in row \(i\), rows
numbered from the top; the \defn{conjugate} \(\lambda'\) is the
transpose, so \(\lambda'_j=\#\{i:\lambda_i\geq j\}\).  A \defn{weak
composition} is a finite, not necessarily ordered, sequence of
nonnegative integers, its \defn{size} again the sum of its entries.

A \defn{semistandard (Young) tableau} of \defn{shape} \(\eta\) fills
the Young diagram of \(\eta\) with positive integers, \emph{weakly
increasing along each row} and \emph{strictly increasing down each
column}.  Its \defn{content} is the weak composition
\(\lambda=(\lambda_1,\lambda_2,\ldots)\), where \(\lambda_c\) counts
the entries equal to \(c\).  A one-row tableau with entries in
\(\{1,2\}\) is therefore determined by its number of \(1\)'s; this is
the only case we use.  The number of semistandard tableaux of shape
\(\eta\) and content \(\lambda\) is the \defn{Kostka number}
\(K_{\eta\lambda}\in\Z_{\geq0}\), which depends on \(\lambda\) only
through the multiset of its parts.  See Figure~\ref{fig:ssyt}.

\begin{figure}[H]
\centering
\begin{tikzpicture}[line join=round,
  tcell/.style={draw, minimum size=0.72cm, anchor=south west,
    font=\small, inner sep=0pt, fill=onefill!45}]
\def\cs{0.72}
% shape eta = (3,2,1), a semistandard filling, content (1,2,2,1):
% row 1 (bottom-to-top in south-west coords is top row highest y)
% top row:    1 2 2
% middle row:  3 3
% bottom row:   4
\node[tcell] at (0*\cs,2*\cs) {$1$};
\node[tcell] at (1*\cs,2*\cs) {$2$};
\node[tcell] at (2*\cs,2*\cs) {$2$};
\node[tcell] at (0*\cs,1*\cs) {$3$};
\node[tcell] at (1*\cs,1*\cs) {$3$};
\node[tcell] at (0*\cs,0*\cs) {$4$};
\end{tikzpicture}
\caption{A semistandard tableau of shape \(\eta=(3,2,1)\) and content
\(\lambda=(1,2,2,1)\).}
\label{fig:ssyt}
\end{figure}

The \defn{Gaussian polynomial} (or \(q\)-binomial coefficient) is
\begin{equation}\label{eq:gaussian-def}
\qbinom Nj=
\begin{cases}
\dfrac{(q)_N}{(q)_j(q)_{N-j}},&0\leq j\leq N,\\[6pt]
0,&\text{otherwise},
\end{cases}
\qquad\text{where }(q)_m=\prod_{t=1}^m(1-q^t).
\end{equation}
Since \(\qbinom Nj\) vanishes outside \(0\leq j\leq N\), every finite
sum below may be taken over all integers, the Gaussian polynomials
imposing the inequalities.  We use freely
\begin{displaymath}
\qbinom Nj=\sum_{\substack{w\in\{0,1\}^N\\\abs w=j}}q^{\inv(w)}
\qquad {\text {\rm and}} \qquad
\qbinomQ Nj{q^{-1}}=q^{-j(N-j)}\qbinom Nj,
\end{displaymath}
standard \(q\)-counting of lattice paths and \(q\)-reciprocity
respectively, where \(\abs w\) is the number of \(1\)'s in \(w\) and
\(\inv(w)=\#\{p<p':w_p=1,\ w_{p'}=0\}\).

\subsection{The Schilling supernomial and its two expansions}
\label{subsec:schilling}

Supernomial coefficients package, in a single symmetric-function
object, the fermionic sums on the sum side of Rogers--Ramanujan-type
identities; see \cite{WarnaarSurvey} for a survey and the open problems
around it.  Let \(\mu\) be a partition and \(\lambda\) a weak
composition with \(\abs\lambda=\abs\mu\).  Following
\cite[\S2]{SchillingWarnaar}, the \defn{completely symmetric
\(q\)-supernomial coefficient} is
\begin{equation}\label{eq:supernomial-def}
S_{\lambda\mu}(q):=\sum_{\eta}K_{\eta\lambda}\,K_{\eta\mu}(q),
\end{equation}
summed over partitions \(\eta\) of \(\abs\mu\), where
\(K_{\eta\lambda}\in\Z_{\geq0}\) is the Kostka number and
\(K_{\eta\mu}(q)\) the Kostka--Foulkes polynomial, its
\(q\)-analogue arising as a Hall--Littlewood transition coefficient.
Since \(K_{\eta\lambda}\) depends only on the multiset of parts of
\(\lambda\), so does \(S_{\lambda\mu}(q)\).  We work with the
normalized (``tilde'') version
\begin{equation}\label{eq:supernomial-tilde}
n(\mu):=\sum_{p<p'}\min(\mu_p,\mu_{p'}),
\qquad
\widetilde S_{\lambda\mu}(q):=q^{n(\mu)}S_{\lambda\mu}(q^{-1}).
\end{equation}

Then \(\widetilde S_{\lambda\mu}\) is again symmetric under permuting
the parts of \(\lambda\), its \defn{content symmetry}.  This is
immediate from \eqref{eq:supernomial-def}, and we use it only to match
the index convention of Proposition~\ref{prop:SchillingA1}.  A second
symmetry, the order-independence of the multitableau expansion
(Proposition~\ref{prop:tableau-supernomial}), permutes instead the row
components built from \(\mu\).  It does not follow from the definition,
and it is this deeper symmetry that later reconciles the two congruence
classes \(b\equiv\pm1\pmod3\); see
Remark~\ref{rem:order-vs-content}.

The first expansion writes \(\widetilde S_{\lambda\mu}\) as an
inversion generating function over tuples of one-row tableaux.  Fix an
ordering of the parts of \(\mu\), and let \(T=(T_1,T_2,\dots)\) be a
tuple of one-row semistandard tableaux with entries in \(\{1,2\}\),
where \(T_p\) has length the \(p\)-th part of \(\mu\) and the total
content is \(\lambda\).  Schilling's statistic \(\inv(T)\) depends on
that ordering: an inversion is a same-column pair with a \(2\) in the
earlier component and a \(1\) in the later, or an adjacent-column pair
with a \(1\) in column \(i+1\) of the earlier component and a \(2\) in
column \(i\) of the later.

\begin{proposition}[Schilling \cite{Schilling2001}]
\label{prop:tableau-supernomial}
With the notation above,
\begin{equation}\label{eq:tableau-supernomial}
\widetilde S_{\lambda\mu}(q)=\sum_T q^{\inv(T)}
\end{equation}
for \emph{every} ordering of the parts of \(\mu\).  In particular the
right-hand side is independent of that ordering, although the individual
exponents \(\inv(T)\) are not.
\end{proposition}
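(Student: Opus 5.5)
The plan is to prove the identity for one convenient ordering of the parts of \(\mu\), and then deduce that every ordering gives the same generating function by checking that swapping two adjacent components leaves the total unchanged.

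\emph{Reduction to rectangles.} For content \(\lambda=(M,\abs\mu-M)\) with entries in \(\{1,2\}\), each \(\eta\) with \(K_{\eta\lambda}\neq0\) has at most two rows. I would write \(\widetilde S_{\lambda\mu}\) as a sum over two-row shapes \(\eta=(\abs\mu-j,j)\) with \(j\leq\min(M,\abs\mu-M)\). Using the standard fermionic formula for two-row Kostka--Foulkes polynomials (Kirillov--Reshetikhin),
\[
q^{n(\mu)}K_{\eta\mu}(q^{-1})=\sum_{\text{configurations}} q^{c}\prod_{i}\qbinom{P_i+m_i}{m_i},
\]
I would turn \(\widetilde S_{\lambda\mu}\) into a fermionic sum in which \(\mu\) enters only through the vacancy numbers \(P_i=\sum_p\min(i,\mu_p)-2\sum_j\min(i,\cdot)\). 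These vacancy numbers are manifestly symmetric in the parts of \(\mu\), so the left-hand side of \eqref{eq:tableau-supernomial} is order-independent. It therefore suffices to prove the identity for one ordering and to show that the right-hand side is order-independent.

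\emph{One ordering.} Take the parts in decreasing order. I would prove the identity by induction on the number of parts. Removing the last (shortest) component \(T_\ell\) of length \(\mu_\ell\), with \(m\) ones, changes \(\inv\) by the number of inversions between \(T_\ell\) and the earlier components. Because one-row tableaux are determined by their number of \(1\)'s, this count is an explicit function of \(m\) and the column profile of the earlier components. On the supernomial side the corresponding recursion is the Pieri-type rule \(S_{\lambda\mu}=\sum_m q^{(\cdot)}\qbinom{\cdot}{\cdot} S_{\lambda-(m,\mu_\ell-m),\mu\setminus\mu_\ell}\). Schilling--Warnaar obtain this by expanding \(h_{\mu_\ell}\) times a product of Hall--Littlewood functions, and specializing to two variables gives a \(q\)-Vandermonde-type convolution. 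Matching the two recursions is then a direct computation.

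\emph{Order independence.} I would construct, for adjacent components \(T_p,T_{p+1}\) with lengths \(\mu_p\geq\mu_{p+1}\), an involution on the pair of blocks, a two-letter analogue of the Lascoux--Schützenberger symmetric-group action. It should exchange their lengths, fix the total content, and change \(\inv\) by an amount exactly compensated elsewhere. Concretely, for the pair \((c,d)\) of numbers of \(1\)'s, the local inversion count between the two blocks is piecewise linear in \(c\) and \(d\). I would verify that the two-variable generating functions \(\sum q^{\text{local inv}}\) over pairs of fixed total content agree for the two orders. This is a finite identity of Gaussian polynomials of the form \(\sum_c q^{f(c)}=\sum_c q^{g(c)}\) with the same range. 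Inversions involving the other components see only the combined column profile, and I would check that the combined profile is unchanged when the content is fixed, via the multiset of column entries.

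The main obstacle is this last check. Inversions with a third component depend on which columns hold \(1\)'s, not only on the counts, so the swap must be bijective on profiles. If the direct bijection fails, I would fall back on the first step alone: prove the identity separately for each ordering by induction, peeling off the last component regardless of its size. Order-independence then follows from the symmetry of the left-hand side.
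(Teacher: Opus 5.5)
Your proposal has a genuine gap, and it sits in the step that carries the content: the identity for one ordering. (The paper does not prove this itself; it cites Schilling's statistic-preserving bijection to rigged configurations.)

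\textbf{The peeling induction does not close.} Removing the last row cannot be matched with any recursion indexed by content. The reason is that the inversions between \(T_\ell\) and the earlier rows are not a function of \(m\) and the number of \(1\)'s in \((T_1,\dots,T_{\ell-1})\). Write \(u_p\) for the number of \(1\)'s in \(T_p\) and \(a_i=\#\{p<\ell:u_p\ge i\}\) for the column profile. With rows in decreasing order, these cross inversions number \(\sum_{i\le m}(\ell-1-a_i)+\sum_{m<i\le\mu_\ell}a_{i+1}\), which depends on the profile, not just the content.

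Concretely, take \(\mu=(2,2,1)\) in this order, with two \(1\)'s in total and \(T_3\) the single cell \(2\). The fillings \((u_1,u_2)=(1,1),(2,0),(0,2)\) have \(\inv=0,2,3\), so this slice is \(1+q^2+q^3\). That is not a monomial times \(1+q+q^2\), the generating function for \(\mu=(2,2)\) with two \(1\)'s. The full sum \(1+q+2q^2+q^3\) does split as \((1+q+q^2)+q^2(1+q)\), but not slice by slice. So ``matching the two recursions'' is not a computation you can carry out, and the supernomial Pieri rule is in any case asserted without proof.

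Your fallback rests on the same induction, so as written the proposal establishes the identity for no ordering. Your first step is also vacuous: \(\widetilde S_{\lambda\mu}\) is defined from the partition \(\mu\) and never sees an order.

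\textbf{The missing idea is to work at fixed column profile rather than fixed content.} Order the rows by weakly increasing length, and put \(c_i=\mu'_i\) and \(a_i=\#\{p:u_p\ge i\}\).
\begin{itemize}
\item At each level \(i\), choose top-down which of the \(c_i-a_{i+1}\) rows of length \(\ge i\) not reaching level \(i+1\) stop exactly at \(i\). Recording these choices as binary words gives \(\sum_{T:\,\boldsymbol a(T)=\boldsymbol a}q^{J(T)}=\prod_i\qbinom{c_i-a_{i+1}}{a_i-a_{i+1}}\).
\item The pairwise argument in the proof of Lemma~\ref{lem:internal} gives \(\inv(T)+J(T)=\sum_i a_i(c_i-a_i)\).
\item Reciprocity then turns \(\sum_T q^{\inv(T)}\) into \(\sum_{\boldsymbol m}q^{\Phi(\boldsymbol m)}\prod_i\qbinom{c_i-c_{i+1}+m_{i+1}}{m_i}\) with \(m_i=c_i-a_i\). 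This is Proposition~\ref{prop:SchillingA1}, and it is where your Kirillov--Reshetikhin fermionic formula belongs.
\end{itemize}

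\textbf{Order-independence needs the same refinement.} Checking agreement at fixed pair content is insufficient. For example, \((u,v)=(2,0)\) and \((1,1)\) on two rows of length \(2\) have equal content but different column profiles, and the inversions with third rows see the profile. What works is this. For adjacent rows of lengths \(L\ge L'\), with \(u\) ones in the longer row and \(v\) in the shorter:
\begin{itemize}
\item keep \((u,v)\) if \(u>L'\);
\item exchange the two contents if \(u\le L'\).
\end{itemize}
This map preserves the column profile. One checks that the pair's count \(\max(0,v-u)+\max(0,\min(u-1,L')-v)\) in the long-first order equals the count of the image in the short-first order. Hence \(\inv\) is preserved exactly under the transposition.
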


\begin{remark}
\label{rem:order-vs-content}
Proposition~\ref{prop:tableau-supernomial} says that permuting the rows
leaves the polynomial unchanged while changing the individual inversion
exponents; this is the freedom we exploit in Section~\ref{sec:proof} to
produce \(E^-\) and \(E^+\).  It is easily conflated with the
\emph{content symmetry} of \eqref{eq:supernomial-tilde}, but the two
differ in exactly the way that matters here.  Content symmetry permutes
the parts of the \emph{lower} index \(\lambda\); it is immediate from
\eqref{eq:supernomial-def}, since \(K_{\eta\lambda}\) depends only on
the multiset of parts of \(\lambda\), and we use it only to match
Schilling's index convention in Proposition~\ref{prop:SchillingA1}.
Order-independence instead permutes the row lengths taken from
\(\mu\); it does \emph{not} follow from \eqref{eq:supernomial-def}, but
requires Schilling's statistic-preserving bijection
\cite[Theorem~4.1]{Schilling2001}.  It is the latter that reconciles the
classes \(b\equiv\pm1\pmod3\).
\end{remark}

\begin{proof}[Proof of Proposition~\ref{prop:tableau-supernomial}]
This is \cite[\S3.2, \S4.1]{Schilling2001}.  Order-independence follows
from Schilling's statistic-preserving bijection
\cite[Theorem~4.1]{Schilling2001}, whose target is a set of rigged
configurations with parameter \(\mu\), the partition of the multiset of
row lengths; it retains no memory of the component order, so permuting
the rows gives the same polynomial with different inversion exponents.
\end{proof}

The second expansion is fermionic and comes from the
rigged-configuration bijection.  A word on the labels \(A_1\) and
\(A_2\).  In the crystal-bases and rigged-configuration literature,
type \(A_{n-1}\) refers to objects built on an \(n\)-letter alphabet
\(\{1,\ldots,n\}\), matching the natural representation of
\(\mathfrak{sl}_n\) \cite{HongKang,KirillovSchillingShimozono}.  So the
\defn{\(A_1\) case} is the two-letter case, tableaux filled from
\(\{1,2\}\) with content \(\lambda=(\lambda_1,\lambda_2)\), and the
\defn{\(A_2\) case} the three-letter case on \(\{1,2,3\}\).  Our
supernomial computations are entirely \(A_1\), while the products we
invoke are Warnaar's \(A_2\) Andrews--Gordon identities of
\S\ref{subsec:warnaar}: the argument is a bridge from \(A_1\)
supernomial algebra to an \(A_2\) product.

We now state the \(A_1\) expansion in the specialized form we shall
use.

\begin{proposition}[Schilling \cite{Schilling2001}, \texorpdfstring{$A_1$}{A1} specialization]
\label{prop:SchillingA1}
Let \(\mu'=(c_1,\ldots,c_\ell)\) with
\(c_1\geq\cdots\geq c_\ell\geq0\) be the conjugate of \(\mu\), and set
\(c_{\ell+1}:=0\).  For an integer \(M\) put
\(\lambda=(\abs\mu-M,M)\).  Then we have
\begin{equation}\label{eq:SchillingA1}
\widetilde S_{(\abs\mu-M,M),\mu}(q)
=\sum_{\substack{m_1,\ldots,m_\ell\geq0\\ \sum_i m_i=M}}
q^{\Phi(\boldsymbol m)}\prod_{i=1}^{\ell}\qbinom{T_i}{m_i},
\end{equation}
where \(m_{\ell+1}:=0\), $T_{\ell}:=c_{\ell}$ and
\begin{align*}
&T_i:=c_i-c_{i+1}+m_{i+1}\ \ (1\leq i<\ell),\\
&\Phi(\boldsymbol m):=\sum_{i=1}^{\ell-1}(c_{i+1}-m_{i+1})m_i.
\end{align*}
The identity holds for either order of the two entries of
\(\lambda=(\abs\mu-M,M)\). 
\end{proposition}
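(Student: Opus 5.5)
The plan is to derive \eqref{eq:SchillingA1} from the general fermionic (rigged-configuration) expansion of \(\widetilde S_{\lambda\mu}\) in \cite{Schilling2001}, specialized to a two-letter content. No new combinatorics is needed beyond bookkeeping, and the final clause about the order of the entries of \(\lambda\) is just the content symmetry noted after \eqref{eq:supernomial-tilde}.

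\emph{Step 1: reduce to two-row shapes.} Because \(\lambda=(\abs\mu-M,M)\) has two parts, \(K_{\eta\lambda}\) vanishes unless \(\eta=(\abs\mu-j,j)\) has at most two rows. For such \(\eta\) it equals \(1\) exactly when \(0\le j\le\min(M,\abs\mu-M)\). Hence
\[
\widetilde S_{(\abs\mu-M,M),\mu}(q)=\sum_{j=0}^{\min(M,\abs\mu-M)}q^{n(\mu)}K_{(\abs\mu-j,j),\mu}(q^{-1}),
\]
a sum of cocharge Kostka--Foulkes polynomials indexed by two-row shapes. In particular the right-hand side depends on \(\lambda\) only through \(\{M,\abs\mu-M\}\), which gives the last sentence of the proposition.

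\emph{Step 2: use Schilling's fermionic formula in column coordinates.} Schilling's theorem expresses this sum over \(j\) as a single unrestricted sum over \(A_1\) configurations \(\nu\) of size \(M\). The vacancy numbers are
\[
P_i=\sum_a\min(i,\mu_a)-2\sum_k\min(i,\nu_k),
\]
and the summand is a product of Gaussian polynomials \(\qbinom{P_i+m_i(\nu)}{m_i(\nu)}\), with the vacancy positivity constraints dropped because we sum over \(j\). I will reparametrize by the column data \(c_i=\mu'_i\), since \(\sum_a\min(i,\mu_a)=c_1+\cdots+c_i\). I will also pass from multiplicities of \(\nu\) to the variables \(m_i\) of \eqref{eq:SchillingA1}, so that \(\sum_i m_i=M\). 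The aim is to show that the top entries \(P_i+m_i(\nu)\) become \(T_i=c_i-c_{i+1}+m_{i+1}\), and that the quadratic cocharge exponent, after the normalization by \(q^{n(\mu)}\) and \(q\)-reciprocity, collapses to \(\Phi(\boldsymbol m)=\sum_{i<\ell}(c_{i+1}-m_{i+1})m_i\).

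\emph{Step 3: cross-check via the multitableau expansion.} As an independent check, and a fallback proof, I would use Proposition~\ref{prop:tableau-supernomial} with the rows ordered by decreasing length, and peel off the columns one at a time. Let \(m_i\) count the \(1\)'s in column \(i\). Weak increase along rows forces a \(1\) at column \(i\) in every row that has a \(1\) at column \(i+1\), so only a reduced set of cells in column \(i\) is free. Schilling's same-column and adjacent-column inversions between column \(i\) and column \(i+1\) then contribute a cross term, which should equal the summand \((c_{i+1}-m_{i+1})m_i\) of \(\Phi\). The \(q\)-counting of lattice paths produces the Gaussian factor. Induction on \(\ell\), removing the last column with \(T_\ell=c_\ell\), would finish the argument.

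The main obstacle is the exact matching of variables and exponents. The naive column count gives binomials of the form \(\qbinom{c_i-m_{i+1}}{m_i-m_{i+1}}\) rather than \(\qbinom{T_i}{m_i}\). So either a change of variables is needed, replacing \(m_i\) by its complement within the rows that end at column \(i\), or I have to pass through the rigged-configuration form instead. I expect to test the resulting formula on small \(\mu\), say \(\ell\le2\), to pin down the correct substitution before writing the general induction.
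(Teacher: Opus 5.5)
\textbf{There is a genuine gap: your main line, Step~2, cannot work as stated.} The rigged-configuration sum you describe runs over \(A_1\) configurations \(\nu\) with \(\abs\nu=M\) and vacancy numbers \(P_i=\sum_a\min(i,\mu_a)-2\sum_k\min(i,\nu_k)\). That is the fermionic formula for the \emph{single} Kostka--Foulkes polynomial of shape \((\abs\mu-M,M)\).

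The shapes \((\abs\mu-j,j)\) with \(j<M\) come from configurations of size \(j\). They do not come from size-\(M\) configurations with negative vacancy numbers: under \eqref{eq:gaussian-def}, a negative \(P_i\) simply kills the term, so dropping positivity changes nothing. Concretely, take \(\mu=(1^L)\) and \(M=1\). Then \(\nu=(1)\) and \(P_1=L-2\), so your sum is a power of \(q\) times \(\qbinom{L-1}{1}\), whereas the target is \(\qbinom{L}{1}\).

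No termwise bookkeeping can rescue this either. For \(\mu=(1^L)\) the right side of \eqref{eq:SchillingA1} is the single term \(\qbinom{L}{M}\). Also, multiplicities of \(\nu\) satisfy \(\sum_i i\,m_i(\nu)=M\), not \(\sum_i m_i=M\).

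The missing input is Schilling's \(q\)-multinomial formula \cite[(2.1)--(2.3)]{Schilling2001}, which sums over chains of partitions inside \(\mu'\). For two-letter content it has a single intermediate \(\nu\subseteq\mu'\) with \(\abs\nu=\abs\mu-M\). The substitution \(m_i=c_i-\nu_i\) turns the \(i\)-th multinomial into \(\qbinom{T_i}{m_i}\) and the exponent \(\sum_i\nu_{i+1}(c_i-\nu_i)\) into \(\Phi(\boldsymbol m)\). Together with content symmetry (your Step~1), that is the paper's entire proof.

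\textbf{Your Step~3 can be made into a genuine alternative proof, but not with your choice of variables.} Taking \(m_i\) to be the number of \(1\)'s in column \(i\) fails termwise. For \(\mu=(2)\), the tableau \(12\) has \(\inv=0\), yet it gets \(m=(1,0)\) and \(\qbinom{T_1}{m_1}=\qbinom{0}{1}=0\). Here is the repair.
\begin{itemize}
\item Let \(\nu_i\) be the number of \(1\)'s in column \(i\), and set \(m_i:=c_i-\nu_i\), the number of \(2\)'s. This is the complement in the whole column, not among rows ending at \(i\).
\item Let \(J(T)\) be the sum over levels \(i\) of the inversions of the following word. Among the \(c_i-\nu_{i+1}\) rows of length \(\geq i\) with no \(1\) in column \(i+1\), in component order, record which have exactly \(i\) ones.
\item At fixed \(\nu\), the generating function of \(J\) is \(\prod_i\qbinom{c_i-\nu_{i+1}}{\nu_i-\nu_{i+1}}=\prod_i\qbinom{T_i}{m_i}\).
\item The pairwise count in the proof of Lemma~\ref{lem:internal} gives \(\inv(T)+J(T)=\sum_i\nu_i(c_i-\nu_i)\). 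That count goes through for either ordering of the rows by length, so your decreasing order is fine.
\item Hence \(\sum_T q^{\inv(T)}=q^{\sum_i\nu_i(c_i-\nu_i)}\prod_i\qbinom{T_i}{m_i}_{\!q^{-1}}\). Since \(T_i-m_i=\nu_i-\nu_{i+1}\), \(q\)-reciprocity turns the exponent into \(\sum_i\nu_im_i-\sum_i m_i(\nu_i-\nu_{i+1})=\sum_i m_i\nu_{i+1}=\Phi(\boldsymbol m)\).
\item The content is then exactly \((\abs\mu-M,M)\).
\end{itemize}
So the inversions do not split as a cross term between columns \(i\) and \(i+1\) plus a \(q\)-count. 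The Gaussian enters at \(q^{-1}\) and is converted by reciprocity.

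This repaired route derives \eqref{eq:SchillingA1} from Proposition~\ref{prop:tableau-supernomial} alone, reusing the paper's own Lemma~\ref{lem:internal} computation. The paper's route is shorter but simply quotes the multinomial formula.
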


\begin{proof}
This is the two-part-content specialization of Schilling's formula
\cite[(2.1)--(2.3)]{Schilling2001}. We spell it out because the
literature contains several charge/cocharge normalizations of
Kostka--Foulkes and supernomial polynomials.  In
\cite[(2.1)--(2.3)]{Schilling2001}, the case of a two-part content
has one intermediate partition
\(\nu\subseteq\mu'=(c_1,\ldots,c_\ell)\), with
\(\abs\nu=\abs\mu-M\).  Put \(m_i=c_i-\nu_i\).  Then we have
\(\sum_i m_i=M\), and the \(i\)-th \(q\)-multinomial factor in
\cite[(2.1)]{Schilling2001} becomes
\[
\genfrac{[}{]}{0pt}{}{c_i-\nu_{i+1}}
 {\nu_i-\nu_{i+1},\ c_i-\nu_i}_{\!q}
 =\qbinom{c_i-c_{i+1}+m_{i+1}}{m_i}
 =\qbinom{T_i}{m_i}.
\]
Moreover, Schilling's exponent \cite[(2.3)]{Schilling2001} becomes
\[
\sum_{i=1}^{\ell-1}\nu_{i+1}(c_i-\nu_i)
 =\sum_{i=1}^{\ell-1}(c_{i+1}-m_{i+1})m_i
 =\Phi(\boldsymbol m).
\]
This derives \eqref{eq:SchillingA1} directly, with no additional
normalization factor; the formula uses precisely Schilling's
convention
\(\widetilde S_{\lambda\mu}(q)=q^{n(\mu)}S_{\lambda\mu}(q^{-1})\)
from \eqref{eq:supernomial-tilde}.  (A computation made with a
modified, charge-normalized Kostka--Foulkes routine must be converted
to this convention before comparison.)  That the identity holds for
either order of the entries of \(\lambda\) is content symmetry,
\eqref{eq:supernomial-tilde}.  Finally, \(0\leq m_i\leq T_i\) is
equivalent to \(\nu_i\geq\nu_{i+1}\) and \(\nu_i\leq c_i\), so the
vanishing convention \eqref{eq:gaussian-def} legitimizes the
unrestricted nonnegative sum; and if \(M>\abs\mu\), both sides
vanish, since \(m_\ell\leq c_\ell\) and
\(m_i\leq c_i-c_{i+1}+m_{i+1}\) give \(m_i\leq c_i\) for all \(i\),
whence \(\sum_i m_i\leq\sum_i c_i=\abs\mu\).
\end{proof}

Equating Propositions~\ref{prop:tableau-supernomial}
and~\ref{prop:SchillingA1} gives one polynomial identity with two
faces: a tableau side, whose exponent depends on the chosen ordering of
rows, and a fermionic side, manifestly a sum of products of Gaussian
polynomials.  This is the key mechanism of the paper.

\subsection{Warnaar's \texorpdfstring{$A_2$}{A2} Andrews--Gordon
identities}
\label{subsec:warnaar}
The final ingredient is the pair of \(A_2\) analogues of the
Andrews--Gordon identities proved by Warnaar
\cite[(1.10)--(1.11)]{WarnaarAG}.  These express certain
fermionic double sums as products of theta functions.  For
\(x,Q\) with \(\abs Q<1\), write
\begin{equation}\label{eq:theta-def}
(x;Q)_\infty:=\prod_{j\geq0}(1-xQ^j)\qquad {\text {\rm and}}\qquad
\theta(x;Q):=(x;Q)_\infty(Q/x;Q)_\infty,
\end{equation}
and \(\theta(x,y,z;Q):=\theta(x;Q)\theta(y;Q)\theta(z;Q)\).

\begin{proposition}[Warnaar \cite{WarnaarAG}]
\label{prop:warnaar}
For every integer \(k\geq2\), we have
\begin{align}
&\sum_{\substack{r_1,\ldots,r_k\geq0\\ m_1,\ldots,m_{k-1}\geq0}}
\frac{q^{\,r_k^2+\sum_{i=1}^{k-1}(r_i^2-r_im_i+m_i^2)}}{(q)_{r_1}}
\left(\prod_{i=1}^{k-1}\qbinom{r_i}{r_{i+1}}\right)
\qbinom{r_{k-1}+r_k}{m_{k-1}}
\prod_{i=1}^{k-2}\qbinom{r_i-r_{i+1}+m_{i+1}}{m_i}
\notag\\
&\qquad\qquad
=\frac{(q^{3k+2};q^{3k+2})_\infty^2}{(q;q)_\infty^2}\,
\theta\!\left(q^k,q^{k+1},q^{k+1};q^{3k+2}\right),
\label{eq:WarnaarMinus}
\end{align}
which is \cite[(1.10)]{WarnaarAG}, and, for every integer
\(k\geq1\), we have
\begin{align}
&\sum_{\substack{r_1,\ldots,r_k\geq0\\ m_1,\ldots,m_k\geq0}}
\frac{q^{\sum_{i=1}^{k}(r_i^2-r_im_i+m_i^2)}}{(q)_{r_1}}
\left(\prod_{i=1}^{k-1}\qbinom{r_i}{r_{i+1}}\right)
\qbinom{2r_k}{m_k}
\prod_{i=1}^{k-1}\qbinom{r_i-r_{i+1}+m_{i+1}}{m_i}
\notag\\
&\qquad\qquad
=\frac{(q^{3k+4};q^{3k+4})_\infty^2}{(q;q)_\infty^2}\,
\theta\!\left(q^{k+1},q^{k+1},q^{k+2};q^{3k+4}\right),
\label{eq:WarnaarPlus}
\end{align}
which is \cite[(1.11)]{WarnaarAG} (his parameter replaced by
\(k+1\)).
\end{proposition}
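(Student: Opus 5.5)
The plan is not to reprove Warnaar's identities from scratch but to derive Proposition~\ref{prop:warnaar} as a faithful transcription of \cite[(1.10)--(1.11)]{WarnaarAG}. The work lies in checking that every normalization agrees. I will start by writing Warnaar's two identities exactly as stated in \cite{WarnaarAG}, in his own indexing. In that source, the summation variables are a chain \(r_1,\dots,r_{k}\) (for (1.10)) together with auxiliary variables \(m_i\) coupled to it through the \(A_2\) Cartan-type quadratic form \(r_i^2-r_im_i+m_i^2\).

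The first check is the Gaussian convention. Warnaar's \(q\)-binomials vanish outside \(0\le j\le N\), exactly as in \eqref{eq:gaussian-def}, so the unrestricted sums over \(r_i,m_i\ge0\) in \eqref{eq:WarnaarMinus}--\eqref{eq:WarnaarPlus} coincide with his restricted sums. In particular, the factor \(\qbinom{r_i}{r_{i+1}}\) enforces \(r_1\ge\cdots\ge r_k\).

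The second check is the product side. I will expand \(\theta(x;Q)=(x;Q)_\infty(Q/x;Q)_\infty\) and confirm that Warnaar's modulus and theta arguments agree with ours:
\begin{itemize}
\item in (1.10), the modulus is \(3k+2\) and the arguments are \(q^k,q^{k+1},q^{k+1}\);
\item in (1.11), after his parameter is replaced by \(k+1\), the modulus is \(3k+4\) and the arguments are \(q^{k+1},q^{k+1},q^{k+2}\).
\end{itemize}
The same substitution must be carried through the sum side of (1.11), where it turns his final factor into \(\qbinom{2r_k}{m_k}\). I will also confirm that the prefactor \((Q;Q)_\infty^2/(q;q)_\infty^2\) is normalized identically in both places.

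The main obstacle is this index bookkeeping at the boundary of the chain. In \eqref{eq:WarnaarMinus}, the last variable \(r_k\) carries no companion \(m_k\), contributes the lone term \(r_k^2\), and enters only through \(\qbinom{r_{k-1}+r_k}{m_{k-1}}\). In \eqref{eq:WarnaarPlus}, by contrast, there is a full set \(m_1,\dots,m_k\) with top factor \(\qbinom{2r_k}{m_k}\). An off-by-one error in the shift \(k\mapsto k+1\) would silently interchange these two boundary shapes.

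To guard against this, I will test the smallest cases against truncated \(q\)-expansions:
\begin{itemize}
\item \(k=1\) in \eqref{eq:WarnaarPlus}, which has modulus \(7\);
\item \(k=2\) in \eqref{eq:WarnaarMinus}, which has modulus \(8\).
\end{itemize}
For each, I will compare both sides through roughly \(q^{15}\).

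For completeness, I would recall how Warnaar's proof goes, without redoing it. The fermionic sides arise from the \(A_2\) Bailey lemma of \cite{ASW}, iterated \(k\) times. Equivalently, they arise from Hall--Littlewood summations \cite{WarnaarHL} applied to a seed identity. The product sides come from the Macdonald identity for \(A_2^{(1)}\), specialized to give the stated triple-theta quotients. Since the paper assumes this literature, the citation together with the normalization checks above constitutes the proof.
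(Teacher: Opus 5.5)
Your proposal matches the paper, which offers no independent argument and simply quotes Warnaar's (1.10) and (1.11), the latter with his parameter replaced by \(k+1\); so the citation together with your convention checks is the whole proof, and the low-order expansions at moduli \(7\) and \(8\) are only sanity checks. One small slip: in \eqref{eq:WarnaarMinus} the variable \(r_k\) also enters through \(\qbinom{r_{k-1}}{r_k}\), not only through \(\qbinom{r_{k-1}+r_k}{m_{k-1}}\).
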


\noindent
After the substitutions and \(q\)-reciprocity of
\S\ref{subsec:rigged}, summing the fixed-\(M\) expansion
\eqref{eq:SchillingA1} over \(M\) gives the inner sums in \eqref{eq:WarnaarMinus} and \eqref{eq:WarnaarPlus}.  This allows us to assemble \(Z_{3,b}\) into Warnaar's products.

\section{Proof of the main theorem}
\label{sec:proof}

The proof of Theorem~\ref{thm:main} has four steps: coordinates on the
gap set and the statement of the fixed-boundary identities; the
reindexing of the semigroup quadratic form
(\S\ref{subsec:quadratic}); a single fixed-boundary supernomial
transformation yielding both congruence classes
(\S\ref{subsec:master}--\S\ref{subsec:proof-fixed}); and the restoration
of the boundary summation and passage to Warnaar's products
(\S\ref{subsec:product}).

Let \(b>3\) be coprime to \(3\), let \(f=2b-3\), and fix
\(
r_1\geq r_2\geq\cdots\geq r_k\geq0,
\)
where \(b=3k\pm1\).  In the sums over \(\boldsymbol n\), impose
\begin{equation}\label{eq:fixed-r}
n_{f+3-3j}=r_j\qquad(1\leq j\leq k).
\end{equation}
The indices \(f+3-3j=2b-3j\) lie in \(\Gap_{3,b}\) and successive ones
differ by \(3\), so the inequalities on the \(r_j\) are the
boundary-chain inequalities inherited from \(\mathcal C_{3,b}\).  Every
\(\sum_{\boldsymbol n}\) in Theorem~\ref{thm:fixed-boundary} runs over
all \(\boldsymbol n\in\Z^{\Gap_{3,b}}\) satisfying \eqref{eq:fixed-r},
with \(n_t=0\) for \(t\notin\Gap_{3,b}\) as before.  The Gaussian
factors make these sums finite: their nonzero terms confine every free
coordinate to an interval between \(0\) and some \(r_j\); in the
coordinates of Lemma~\ref{lem:Q-reindex}, the nonzero terms are
exactly those satisfying \eqref{eq:domain}.

The next theorem is the pivot of the proof.  On the left stands the
contribution to \(Z_{3,b}\) of all \(\boldsymbol n\) with a fixed
boundary chain \(\boldsymbol r=(r_1,\ldots,r_k)\). On the right, the
 double sum that is the summand of Warnaar's \(A_2\)
Andrews--Gordon identities
\eqref{eq:WarnaarMinus}--\eqref{eq:WarnaarPlus}.  We defer the proof to
\S\ref{subsec:proof-fixed}.

\begin{theorem}
\label{thm:fixed-boundary}
The following identities hold.

\smallskip
\noindent
\emph{(i) If \(b=3k+1\), then we have}
\begin{align}
&\sum_{\boldsymbol n}q^{Q_{3,b}(\boldsymbol n)}
\prod_{j=1}^{k}\left(
\qbinom{n_{3j+2}}{n_{3j-1}}
\qbinom{r_{k-j+1}-n_{3j-5}}
{n_{3j-2}-n_{3j-5}}\right)
\notag\\
&\qquad =
\sum_{m_1,\ldots,m_k\geq0}
q^{\sum_{i=1}^{k}(r_i^2-r_i m_i+m_i^2)}
\qbinom{2r_k}{m_k}
\prod_{i=1}^{k-1}
\qbinom{r_i-r_{i+1}+m_{i+1}}{m_i}.
\label{eq:sum-plus}
\end{align}

\smallskip
\noindent
\emph{(ii) If \(b=3k-1\) (so that \(b\geq5\) and \(k\geq2\)), then we have}
\begin{align}
&\sum_{\boldsymbol n}q^{Q_{3,b}(\boldsymbol n)}
\prod_{j=1}^{k-1}\left(
\qbinom{n_{3j+1}}{n_{3j-2}}
\qbinom{r_{k-j}-n_{3j-4}}
{n_{3j-1}-n_{3j-4}}\right)
\notag\\
&\qquad =
\sum_{m_1,\ldots,m_{k-1}\geq0}
q^{\,r_k^2+
\sum_{i=1}^{k-1}(r_i^2-r_i m_i+m_i^2)}
\qbinom{r_{k-1}+r_k}{m_{k-1}}
\prod_{i=1}^{k-2}
\qbinom{r_i-r_{i+1}+m_{i+1}}{m_i}.
\label{eq:sum-minus}
\end{align}
\end{theorem}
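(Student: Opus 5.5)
Here is the plan. I would follow the route the introduction announces: reindex the quadratic form, recognize a supernomial, and then pass to the fermionic form.

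\emph{Step 1: reindexing.} The gap set \(\Gap_{3,b}\) is the union of two arithmetic progressions of step \(3\): the residues \(\equiv 2b\) and \(\equiv b\pmod 3\) below \(2b-3\). The long progression carries the boundary chain \(n_{2b-3j}=r_j\). The short one, \(\{b-3,b-6,\ldots\}\) interleaved with the small elements of the long one, carries two interlacing chains of free variables. I would first write out \(U_{3,b}(j-i)\) explicitly. It is \(+1\) for \(0\le j-i<3\), \(0\) or \(-1\) in the middle window, and so on. Substituting this expresses \(Q_{3,b}(\boldsymbol n)\) as
\[
\sum_j r_j^2 \;+\; \bigl(\text{a quadratic form in the free chains}\bigr)\;-\;\bigl(\text{cross terms } r_{k-j+1}n_{3j-2}\text{ etc.}\bigr).
\]
This is Lemma~\ref{lem:Q-reindex}. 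I would then set the free chains to be differences \(x_j=n_{3j-1}\), \(y_j=n_{3j-2}-n_{3j-5}\) (in case (i)). With this choice each Gaussian factor on the left of \eqref{eq:sum-plus} becomes \(\qbinom{\cdot}{x}\) or \(\qbinom{r-\cdot}{y}\), and the exponent becomes \(\sum r_j^2\) plus a bilinear form.

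\emph{Step 2: tableau reading.} Take \(\mu'=(r_1+s,\ldots,r_\ell+s)\), as in Corollary~\ref{cor:tableau-model}. I would show that, after fixing \(M=\) (the total number of ``\(1\)'' entries, read as a sum of free differences), the remaining finite Laurent polynomial equals
\[
q^{sM-n(\mu)}\sum_T q^{\inv(T)},
\]
summed over tuples of one-row tableaux of content \((|\mu|-M,M)\). The two blocks of rows correspond to the two free chains. In case (i) the long block is read before the short one; in case (ii) the order is reversed. The key verification is that the products \(\qbinom{N}{j}=\sum q^{\inv(w)}\) for each row, together with Schilling's adjacent-column inversions between consecutive rows, reproduce exactly the bilinear cross terms from Step 1. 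Proposition~\ref{prop:tableau-supernomial} then identifies both cases with the same \(\widetilde S_{(|\mu|-M,M),\mu}\). This is where \(b\equiv\pm1\) are reconciled.

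\emph{Step 3: fermionic form.} I would apply Proposition~\ref{prop:SchillingA1} with \(c_i=r_i+s\) and \(s=r_k\). This gives
\[
T_i=r_i-r_{i+1}+m_{i+1}\quad(i<\ell),
\]
with the last entry \(T_\ell=2r_k\) in case (i), or \(r_{k-1}+r_k\) after the shift \(\ell=k-1\) in case (ii). Next I apply \(q\)-reciprocity \(\qbinomQ Nj{q^{-1}}=q^{-j(N-j)}\qbinom Nj\) to turn \(\Phi(\boldsymbol m)\) together with the monomial \(q^{\sum r_i^2+sM-n(\mu)}\) into
\[
\sum_i\bigl(r_i^2-r_im_i+m_i^2\bigr),
\]
plus \(r_k^2\) in case (ii). Summing over \(M\) removes the constraint \(\sum m_i=M\). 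This yields the right sides of \eqref{eq:sum-plus} and \eqref{eq:sum-minus}.

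The main obstacle is Step 2: matching the semigroup cross terms of \(Q_{3,b}\) exactly with Schilling's inversion statistic for some ordering of rows. The boundary ends (the terms near \(n_1,n_2\) and near \(2b-3\)) will need separate bookkeeping. I would check \(b=4,5,7\) by hand first to fix the normalization \(sM-n(\mu)\).
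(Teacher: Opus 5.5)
Your outline follows the paper's own route: Lemma~\ref{lem:Q-reindex}, then the multitableau reading in both block orders with Schilling's order-independence (Proposition~\ref{prop:tableau-supernomial}), then Proposition~\ref{prop:SchillingA1} with $c_i=r_i+s$, reciprocity, and summation over $M$. Your block orders also agree with the paper's: adjoined ($\boldsymbol a$) block first gives $E^+$ for $b=3k+1$, and $\beta$-block first gives $E^-$ for $b=3k-1$.

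The problem is the identity you set out to verify in Step~2, which you rightly call the crux. It is false as stated, and the error is a $q\leftrightarrow q^{-1}$ orientation, not just a normalizing monomial. With Schilling's statistic, $\sum_Tq^{\inv(T)}=\widetilde S_{(|\mu|-M,M),\mu}(q)$. Now take $M=0$. The only admissible point is $\boldsymbol a=\boldsymbol z=0$, so the slice equals $1$, whereas your formula gives $q^{-n(\mu)}$. At $M=|\mu|$ the slice is $q^{sM}$, and yours is $q^{sM-n(\mu)}$. The correct slice identity is
\[
\sum_{\sum_i(a_i+z_i)=M}q^{E^{\pm}(\boldsymbol a,\boldsymbol z)}B_{\boldsymbol r,s}(\boldsymbol a,\boldsymbol z)
=q^{sM}\widetilde S_{(M,|\mu|-M),\mu}(q^{-1})
=q^{sM-n(\mu)}\sum_T q^{\coinv(T)},
\]
so $\coinv$ must replace $\inv$.

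The reason is that the Gaussian statistic adds to inversions rather than reproducing them. Expand $B_{\boldsymbol r,s}$ level by level: one binary word per column, recording which rows continue or stop, not one per row. This gives $J_a+J_z$ with
\[
\inv(T)+J_a(T)+J_z(T)=sM-E^{\pm}(\boldsymbol a,\boldsymbol z).
\]
Proving this needs the pairwise count of Lemma~\ref{lem:internal}. It runs over \emph{all} pairs of rows in a block, with the $\beta$-rows in weakly increasing length. Each pair contributes once per level, up to the shorter length, reached by exactly one of its two strings of $1$'s, which yields $\sum_ia_i(s-a_i)$ and $\sum_iz_i(r_i-z_i)$. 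You also need both same-column and adjacent-column cross-block inversions, which give $C^-$ or $C^+$ according to the block order. Your sketch mentions only adjacent-column inversions between consecutive rows.

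The grading must also be $M=\sum_i(a_i+z_i)$ in the undifferenced values $a_i,z_i$, i.e. the numbers of rows reaching column $i$. A sum of your differences $y_j$ telescopes to $z_1$ instead.

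With Step~2 corrected, your Step~3 becomes exactly the paper's computation. Expand $\widetilde S(q^{-1})$ by Proposition~\ref{prop:SchillingA1} and apply reciprocity. The relation $(r_{i+1}+s-m_{i+1})m_i+m_i(T_i-m_i)=m_i(r_i+s-m_i)$ then collapses the exponent to $\sum_i(m_i^2-r_im_i)$, and $n(\mu)$ never appears. In your version there is no $q^{-1}$ for reciprocity to act on, and the stray factor $q^{-n(\mu)}$ never cancels.
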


\subsection{The semigroup quadratic form for
\texorpdfstring{\(a=3\)}{a=3}}
\label{subsec:quadratic}

For \(a=3\) the function \(U_{3,b}\) of \eqref{def:quadratic-form} is
simple:
\begin{equation}\label{eq:U3}
U_{3,b}(x)=
\begin{cases}
1,&0\leq x\leq2,\\
-1,&b\leq x\leq b+2,\\
0,&\text{otherwise}.
\end{cases}
\end{equation}
Therefore, \(Q_{3,b}\) is the sum of the squares, the products of gap
variables whose indices differ by \(1\) or \(2\), and the negatives of
those whose indices differ by \(b\), \(b+1\), or \(b+2\).  The gap sets
are
\begin{align}
\Gap_{3,3k-1}
&=\{3t+1:0\leq t\leq2k-2\}
\mathbin{\dot\cup}
\{3t+2:0\leq t\leq k-2\},\label{eq:gaps-minus}\\
\Gap_{3,3k+1}
&=\{3t+1:0\leq t\leq k-1\}
\mathbin{\dot\cup}
\{3t+2:0\leq t\leq2k-1\}.
\label{eq:gaps-plus}
\end{align}
Multiples of \(3\) are not gaps.  For \(b=3k-1\) the least
element of \(\Gamma\) congruent to $2\pmod 3$ is \(b\) and the
least one for \(1\) is \(2b\), giving \eqref{eq:gaps-minus}. For
\(b=3k+1\), the classes \(1\) and \(2\) switch roles, giving
\eqref{eq:gaps-plus}.

The two progressions in \eqref{eq:gaps-minus} and \eqref{eq:gaps-plus}
interlace in opposite orders.  We introduce coordinates retaining this
distinction only in the exponent.  Let \(\ell\geq1\), let
\(\boldsymbol r=(r_1,\ldots,r_\ell)\) satisfy
\(r_1\geq\cdots\geq r_\ell\geq0\), and let \(s\geq0\).  Set
\[
a_0:=s,\qquad a_{\ell+1}:=0,\qquad z_{\ell+1}:=0,
\]
write \(\boldsymbol a=(a_1,\ldots,a_\ell)\) and
\(\boldsymbol z=(z_1,\ldots,z_\ell)\), and define
\begin{align}
E^{-}(\boldsymbol a,\boldsymbol z)
&:=\sum_{i=1}^{\ell}
\bigl(a_i^2+z_i^2+a_i z_i+z_i a_{i-1}
-r_i(a_i+z_i)\bigr),\label{eq:Eminus}\\
E^{+}(\boldsymbol a,\boldsymbol z)
&:=\sum_{i=1}^{\ell}
\bigl(a_i^2+z_i^2+a_i z_i+a_{i+1}z_i
-r_i(a_{i+1}+z_i)\bigr).
\label{eq:Eplus}
\end{align}
Thus \(E^{+}\) is obtained from \(E^{-}\) by raising both \(a_{i-1}\)
and \(a_i\) to \(a_{i+1}\): the coupling \(z_ia_{i-1}\) becomes
\(a_{i+1}z_i\), and the linear term \(-r_i(a_i+z_i)\) becomes
\(-r_i(a_{i+1}+z_i)\), while every quadratic and \(z\)-linear term is
unchanged.  This is exactly the reversal of the two tableau blocks
(\S\ref{subsec:multitableau}), and it reconciles the classes
\(b\equiv\pm1\pmod3\).  In particular the linear coefficient in
\(E^{+}\) is \(-r_ia_{i+1}\), \emph{not} \(-r_ia_i\);
Theorem~\ref{thm:master} is false for \(E^{+}\) with the latter choice.

\begin{lemma}\label{lem:Q-reindex}
The following formulas hold.

\begin{enumerate}
\item If \(b=3k-1\), take \(\ell=k-1\), \(s=r_k\), and
\[
a_i:=n_{3(k-i)-2}\quad(0\leq i\leq\ell),\qquad
z_i:=n_{3(k-i)-1}\quad(1\leq i\leq\ell).
\]
Then we have
\begin{equation}\label{eq:Qminus}
Q_{3,3k-1}(\boldsymbol n)
=\sum_{i=1}^{k}r_i^2
+E^{-}(\boldsymbol a,\boldsymbol z).
\end{equation}

\item If \(b=3k+1\), take \(\ell=k\), \(s=r_k\), and
\[
a_i:=n_{3(k-i)+2}\quad(0\leq i\leq\ell),\qquad
z_i:=n_{3(k-i)+1}\quad(1\leq i\leq\ell).
\]
Here \(a_0=r_k\).  Then we have
\begin{equation}\label{eq:Qplus}
Q_{3,3k+1}(\boldsymbol n)
=\sum_{i=1}^{k}r_i^2
+E^{+}(\boldsymbol a,\boldsymbol z).
\end{equation}
\end{enumerate}
\end{lemma}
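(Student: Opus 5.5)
We prove Lemma~\ref{lem:Q-reindex} by direct expansion of \(Q_{3,b}\) using \eqref{eq:U3}. By \eqref{eq:U3},
\[
Q_{3,b}(\boldsymbol n)=\sum_{i\in\Gap}n_i^2+\sum_{\substack{i,j\in\Gap\\ j-i\in\{1,2\}}}n_in_j-\sum_{\substack{i,j\in\Gap\\ j-i\in\{b,b+1,b+2\}}}n_in_j .
\]
Here each unordered pair appears once, since \(U_{3,b}(x)=0\) for negative \(x\). The plan is to list the gap set in increasing order using \eqref{eq:gaps-minus} or \eqref{eq:gaps-plus}, and to rename every gap variable in terms of \(a_i\), \(z_i\) or \(r_j\).

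The gap set splits into three pieces. The first is the short progression, which is split into alternating positions and renamed \(a_i,z_i\). The second is the initial stretch of the long progression, which interleaves with the short one. The third is the tail of the long progression above the short one, which consists exactly of the fixed variables \(n_{2b-3j}=r_j\) of \eqref{eq:fixed-r}.

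Take \(b=3k-1\) first. The gaps below \(b\) are \(1,2,4,5,\dots,3k-5,3k-4\), and these are the \(a_i=n_{3(k-i)-2}\) and \(z_i=n_{3(k-i)-1}\) for \(1\leq i\leq k-1\). The gaps from \(3k-2\) up to \(6k-5\) in the class \(1\pmod 3\) are \(a_0=n_{3k-2}=r_k\) together with the \(r_j\) for \(j<k\).

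Next we sort the cross terms. The square terms give \(\sum_i(a_i^2+z_i^2)\) plus \(\sum_{j\leq k}r_j^2\), where \(a_0^2=r_k^2\). The difference-\(1\) and difference-\(2\) products among the consecutive gaps \(\dots<3t+1<3t+2<3t+4<\dots\) produce exactly \(a_iz_i\) (difference \(1\)) and \(z_ia_{i-1}\) (difference \(2\)). There are no adjacent terms among the \(r\)'s, because those indices differ by \(3\).

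The negative terms pair a gap \(i\) with \(i+b,\,i+b+1,\,i+b+2\). For \(i=3(k-m)-2\) (an \(a_m\)) and for \(i=3(k-m)-1\) (a \(z_m\)), exactly one of these three shifted indices is a gap in the class \(1\pmod3\) lying above \(b\). Checking residues, it is the index \(6k-3m-2-?\) associated with \(r_m\). This yields \(-r_m(a_m+z_m)\), which matches \eqref{eq:Eminus}. The step requires checking that the other two shifted indices are either non-gaps (multiples of \(3\), or elements of \(\Gamma\) in the class \(2\pmod 3\), since these are at least \(b\)) or exceed \(f\).

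The case \(b=3k+1\) follows the same script with the roles of the classes swapped. Now \(a_i=n_{3(k-i)+2}\), \(z_i=n_{3(k-i)+1}\), and \(a_0=n_{3k+2}=r_k\). The adjacency products again give \(a_iz_i\), but the difference-\(2\) neighbour of \(z_i\) is now \(a_{i+1}\) rather than \(a_{i-1}\). This is because \(3(k-i)+1-2=3(k-i-1)+2\); at \(i=k\), \(a_{k+1}=0\) correctly records that \(n_{-1}=0\). Similarly, the \(b\)-shifts of \(a_{i+1}\) and \(z_i\) land on \(r_i\), which gives the linear term \(-r_i(a_{i+1}+z_i)\) of \eqref{eq:Eplus}. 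Meanwhile \(a_1\)'s partner is absent, so the term \(-r_0a_1\) does not arise.

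The main obstacle is bookkeeping at the boundaries. First, we must show that \(a_0=r_k\) enters only through \(z_1a_0\) in the minus case, and not through a negative term. Second, we must show that no \(r_j r_{j'}\) or \(r_j a_0\) product appears with difference \(b,b+1,b+2\); all such indices lie in a window of length less than \(b\) in a single residue class, so this should reduce to a short inequality. Third, in the plus case we must confirm that \(r_k\)'s square is counted once and that \(a_0\) contributes no coupling at all. The plan is to verify each of these by tabulating, for a generic index \(3t+\epsilon\), the three shifted indices \(3t+\epsilon+b+\{0,1,2\}\) modulo \(3\), and comparing them with the upper ends \(2b-3\) and \(2b-3j\). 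As a sanity check, the result should agree with the displayed example \(Q_{3,10}\).
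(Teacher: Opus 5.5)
Your proposal is correct and is essentially the paper's proof: you expand $Q_{3,b}$ via \eqref{eq:U3}, read off the difference-$1,2$ couplings $a_iz_i$ and $z_ia_{i-1}$ (resp.\ $a_{i+1}z_i$), and check by residues and ranges that the only differences in $\{b,b+1,b+2\}$ pair $a_m,z_m$ with $r_m$ (resp.\ $a_{i+1},z_i$ with $r_i$), which is what the paper does by computing $R_j-A_i$ and $R_j-Z_i$. Three minor fixes: the index you left as $6k-3m-2-?$ is exactly $6k-3m-2=R_m$; the $a_i$ with $i\geq1$ form the initial stretch of the long progression, and only the $z_i$ form the short one; and in the plus case $a_0=r_k=n_{3k+2}$ \emph{does} couple, via $-n_1n_{3k+2}=-z_kr_k$ — this is the $i=k$ linear term of $E^{+}$ you already counted, so the boundary check should be that it contributes nothing else.
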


\begin{proof}
We record the indices and the resulting index differences.  For
\(b=3k-1\) and $1\leq i\leq k,$ we have
$R_i=6k-2-3i,$ 
$A_i=3(k-i)-2$, and $Z_i=3(k-i)-1$,
and the nonzero nonsquare contributions prescribed by \eqref{eq:U3}
are
\smallskip

\[
\begin{array}{|c|c|c|}
\hline
\text{pair}&\text{index difference}&\text{contribution}\\ \hline
(A_i,Z_i)&1& a_i z_i\\
(Z_i,A_{i-1})&2& z_i a_{i-1}\\
(A_i,R_i)&3k=b+1&-a_i r_i\\
(Z_i,R_i)&3k-1=b&-z_i r_i .\\ \hline
\end{array}
\]
\smallskip

\noindent
For \(b=3k+1\), we have
$R_i=6k+2-3i,$ 
$A_i=3(k-i)+2$, and $Z_i=3(k-i)+1$.
The  list is
\smallskip

\[
\begin{array}{|c|c|c|}
\hline
\text{pair}&\text{index difference}&\text{contribution}\\ \hline
(Z_i,A_i)&1&z_i a_i\\
(A_{i+1},Z_i)&2&a_{i+1}z_i\\
(Z_i,R_i)&3k+1=b&-z_i r_i\\
(A_{i+1},R_i)&3k+3=b+2&-a_{i+1}r_i ,\\ \hline
\end{array}
\]
\smallskip

\noindent
the second and fourth rows occurring only for \(i<k\), which is
encoded by \(a_{k+1}=0\).

Both lists are exhaustive.  Distinct indices within any one of the
\(A\)-, \(Z\)- or \(R\)-families differ by a multiple of \(3\), and
their ranges rule out the only relevant multiple; between the \(A\)-
and \(Z\)-families the only differences in the support of
\eqref{eq:U3} are the displayed \(1\) and \(2\).  Finally, we have
$R_j-A_i=3k+3(i-j)$ and $R_j-Z_i=3k-1+3(i-j)$
in the minus case, and
$R_j-Z_i=3k+1+3(i-j)$ and $R_j-A_{i+1}=3k+3+3(i-j)$
in the plus case, so a value in \(\{b,b+1,b+2\}\) occurs only when
\(i=j\).  Adding the squares of all free and fixed variables gives
\eqref{eq:Qminus} and \eqref{eq:Qplus}.
\end{proof}

Substitution of the same indices into the Gaussian products in
Theorem~\ref{thm:fixed-boundary} gives, in both congruence classes,
the expression
\begin{equation}\label{eq:Baz}
B_{\boldsymbol r,s}(\boldsymbol a,\boldsymbol z)
:=\prod_{i=1}^{\ell}
\qbinom{a_{i-1}}{a_i}
\qbinom{r_i-z_{i+1}}{z_i-z_{i+1}}.
\end{equation}
The nonzero terms satisfy
\begin{equation}\label{eq:domain}
s=a_0\geq a_1\geq\cdots\geq a_\ell\geq0,
\qquad
r_i\geq z_i\geq z_{i+1}\quad(1\leq i\leq\ell).
\end{equation}
Conversely, every integral pair satisfying \eqref{eq:domain}
contributes a well-defined polynomial.  Therefore,
\eqref{eq:Baz} and \eqref{eq:domain} lose no terms from either
fixed-boundary sum.

\subsection{A fixed-boundary
\texorpdfstring{\(q\)}{q}-supernomial transformation}
\label{subsec:master}

We now state the technical heart of the paper, a single 
identity accounting for both congruence classes \(b\equiv\pm1\pmod3\)
at once.  It transforms the sum built from the Gaussian
product \(B_{\boldsymbol r,s}\) of \eqref{eq:Baz} and either exponent
\(E^\pm\) of \eqref{eq:Eminus}--\eqref{eq:Eplus} into a sum of the
shape appearing on the sum side of Warnaar's \(A_2\)
Andrews--Gordon identities.  The connection to \(Z_{3,b}\) is made 
in \S\ref{subsec:proof-fixed}.

\begin{theorem}\label{thm:master}
Let \(\ell\geq1\),
\(r_1\geq\cdots\geq r_\ell\geq0\), and \(s\geq0\).  For either
choice \(E=E^{-}\) or \(E=E^{+}\), we have
\begin{align}
&\sum_{\boldsymbol a,\boldsymbol z}
q^{E(\boldsymbol a,\boldsymbol z)}
B_{\boldsymbol r,s}(\boldsymbol a,\boldsymbol z)
\notag\\
&\qquad =
\sum_{m_1,\ldots,m_\ell\geq0}
q^{\sum_{i=1}^{\ell}(m_i^2-r_i m_i)}
\qbinom{r_\ell+s}{m_\ell}
\prod_{i=1}^{\ell-1}
\qbinom{r_i-r_{i+1}+m_{i+1}}{m_i},
\label{eq:master}
\end{align}
where the sum on the left is over \eqref{eq:domain}.
Moreover, \eqref{eq:master} is true after restricting the
left side by
\[
\sum_{i=1}^{\ell}(a_i+z_i)=M
\]
and the right side by \(\sum_{i=1}^{\ell}m_i=M\), for every
\(M\geq0\).
\end{theorem}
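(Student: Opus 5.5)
The plan is to prove the refined, fixed-\(M\) statement; summing over \(M\) then gives \eqref{eq:master}. The right side at fixed \(M\) is recognized via Proposition~\ref{prop:SchillingA1}, and the left side via the multitableau expansion of Proposition~\ref{prop:tableau-supernomial}.

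\emph{Right side.} Take \(\mu\) with \(\mu'=(c_1,\ldots,c_\ell)\), where \(c_i:=r_i+s\). Then \(c_i-c_{i+1}=r_i-r_{i+1}\) for \(i<\ell\), and \(c_\ell=r_\ell+s\). So the Gaussian factors of \eqref{eq:SchillingA1} are exactly those on the right of \eqref{eq:master}. It remains to compare exponents. We have
\[
\Phi(\boldsymbol m)=\sum_{i<\ell}(r_{i+1}+s-m_{i+1})m_i .
\]
I will check that \(\sum_i(m_i^2-r_im_i)\) differs from \(\Phi\) by a quantity depending only on \(M\), \(s\) and \(\boldsymbol r\), plus possibly a \(q\mapsto q^{-1}\) reflection. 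If a raw comparison does not match, I will first apply \(q\)-reciprocity to each Gaussian factor, then the complementation \(m_i\mapsto\) (its complement). The target is an identity of the form
\[
\text{RHS}_M=q^{\,sM-n(\mu)+c(\boldsymbol r)}\,\widetilde S_{(\abs\mu-M,M),\mu}(q)
\]
for an explicit constant \(c(\boldsymbol r)\). This is consistent with the form of Remark~\ref{rem:strata}, which suggests the prefactor \(q^{sM-n(\mu)}\).

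\emph{Left side.} I will build an explicit weight-preserving bijection between the pairs \((\boldsymbol a,\boldsymbol z)\) weighted by \(B_{\boldsymbol r,s}\), or more precisely between the words counted by its Gaussian factors, and tuples \(T\) of one-row \(\{1,2\}\)-tableaux with row lengths the parts of \(\mu\) and exactly \(M\) ones. Since \(\mu'=(r_i+s)\), the parts of \(\mu\) split into two blocks: \(s\) rows of length \(\ell\) (one for each column, via the common shift \(s\)), and the rows coming from \(\boldsymbol r\), namely \(r_i-r_{i+1}\) rows of length \(i\).

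The two Gaussian families in \eqref{eq:Baz} encode these blocks as follows.
\begin{itemize}
\item The factor \(\prod\qbinom{a_{i-1}}{a_i}\) with \(a_0=s\) encodes the first block: \(a_i\) is the number of length-\(\ell\) rows whose first \(i\) entries are \(1\)'s, and the \(q\)-binomials count inversions among equal-length rows.
\item The factor \(\prod\qbinom{r_i-z_{i+1}}{z_i-z_{i+1}}\) encodes the \(\boldsymbol r\)-block in the same way, with \(z_i\) counting \(1\)'s in column \(i\).
\end{itemize}
Under this encoding the total number of \(1\)'s is \(\sum(a_i+z_i)=M\). The quadratic part of \(E^\pm\) should be Schilling's inversion count across the two blocks. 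The terms \(a_iz_i\) count same-column pairs; the terms \(z_ia_{i-1}\) and \(a_{i+1}z_i\) count adjacent-column pairs. Whether the \(a\)-block precedes or follows the \(z\)-block decides which of the two appears, giving \(E^-\) in one order and \(E^+\) in the other. The linear terms \(-r_i(\cdot)\) and the squares come from normalizing against \(n(\mu)\) and the content.

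With this in hand, I apply Proposition~\ref{prop:tableau-supernomial} to each ordering. This yields
\[
\text{LHS}_M^{\pm}=q^{\,sM-n(\mu)+c(\boldsymbol r)}\,\widetilde S_{(\abs\mu-M,M),\mu}(q)
\]
for both signs, with the same constant as on the right side. Order-independence is exactly what makes the \(E^-\) and \(E^+\) sums agree.

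\emph{Main obstacle.} The hard step is the exact exponent bookkeeping in the bijection, that is, verifying that \(\inv(T)\) equals \(E^\pm\) up to the common normalization. The first thing I will do is test \(\ell=1\), where both sides are explicit \(q\)-Vandermonde sums. Next I will test \(\ell=2\) with small \(s\) and \(\boldsymbol r\), in order to pin down the normalization constant and the correct block order for each sign.

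If the direct bijection proves awkward, my fallback is an induction on \(\ell\). The variables \(a_\ell,z_\ell\) (respectively \(m_\ell\)) are summed out using \(q\)-Vandermonde. On the left this combines \(\qbinom{a_{\ell-1}}{a_\ell}\) with the \(z_\ell\) factor; on the right it reduces to the recursion of Proposition~\ref{prop:SchillingA1} with \(\ell-1\) and shifted \(s\). This route avoids tableaux but must be carried out separately for \(E^-\) and \(E^+\).
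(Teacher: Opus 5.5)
Your plan is the paper's own proof. You work slice by slice in \(M\), read the right side as Schilling's \(A_1\) formula (Proposition~\ref{prop:SchillingA1}) with \(c_i=r_i+s\), and read the left side as a multitableau sum over a block of \(s\) rows of length \(\ell\) (encoded by \(\boldsymbol a\)) and the \(\beta\)-block (encoded by \(\boldsymbol z\)). You then get \(E^-\) and \(E^+\) from the two block orders and equate them by order-independence (Proposition~\ref{prop:tableau-supernomial}).

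One correction is needed, because your displayed target is false as written. The common value of both sides at fixed \(M\) is
\[
q^{sM}\,\widetilde S_{(\abs\mu-M,M),\mu}(q^{-1})=q^{\,sM-n(\mu)}\,S_{(\abs\mu-M,M),\mu}(q),
\]
with no extra constant \(c(\boldsymbol r)\). In general this is not a monomial multiple of \(\widetilde S_{(\abs\mu-M,M),\mu}(q)\): for \(\mu'=(3,1)\) and \(M=2\) one has \(\widetilde S=1+q+2q^2\).

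The reason is that the Gaussian word statistics are complementary to Schilling's inversions, not equal to them, so the Gaussian factors do not "count inversions" as you said. Order the \(\beta\)-rows by weakly increasing length; you need this for the pairwise count. Then \(\inv_a+J_a=\sum_i a_i(s-a_i)\) and \(\inv_z+J_z=\sum_i z_i(r_i-z_i)\). The cross-block inversions produce the couplings \(a_iz_i\), \(z_ia_{i-1}\) or \(a_{i+1}z_i\) and the linear terms in \(a\), and they enter with a minus sign. This gives \(sM-\inv(T)=J_a+J_z+E^{\pm}\), so the left side is \(q^{sM}\sum_T q^{-\inv(T)}\).

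On the right, \(q\)-reciprocity turns Schilling's exponent into \(sM-\Phi(\boldsymbol m)-\sum_i m_i(T_i-m_i)=\sum_i(m_i^2-r_im_i)\). So the \(q\mapsto q^{-1}\) reflection you hedged on is forced on both sides. No complementation of the \(m_i\) is needed, and the \(q\)-Vandermonde fallback is unnecessary.
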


\noindent
The fixed-\(M\) refinement is structural, and the
unrestricted identity follows by summing these slices.

\begin{remark}
\label{rem:laurent}
Both sides of \eqref{eq:master} are finite \emph{Laurent} polynomials:
the exponents \(\sum_i(m_i^2-r_im_i)\) and \(E^{\pm}\) are negative for
many admissible values.  This is intrinsic to the reciprocity
\eqref{eq:qreciprocity}, and the quadratic prefactors of
\eqref{eq:Qminus} and \eqref{eq:Qplus} restore nonnegativity on
reassembly.  A numerical check truncated at \(q^0\) will therefore
appear to falsify \eqref{eq:master}.
\end{remark}

\subsection{The multitableau expansion}
\label{subsec:multitableau}

We compute the fermionic weight
\(B_{\boldsymbol r,s}(\boldsymbol a,\boldsymbol z)\) of \eqref{eq:Baz}
as an inversion generating function over tableaux. We package the
boundary data \((\boldsymbol r,s)\) into a partition \(\mu\), realize
each admissible \((\boldsymbol a,\boldsymbol z)\) as a tuple of one-row
tableaux filled with \(1\)'s and \(2\)'s, and show that Schilling's
inversion statistic \(\inv\) reproduces the exponent \(E^{\pm}\).  This
 is entirely combinatorial: \(a_i\) and \(z_i\) count how
many rows of each block reach column \(i\), and every inversion is a
local crossing between a \(1\) and a \(2\).
Figure~\ref{fig:multitableau} shows a small instance and the inversion
rule in miniature.

Let \(\beta\) be the partition with \(\beta'=(r_1,\ldots,r_\ell)\), and
let \(\mu\) be \(\beta\) with \(s\) parts of size \(\ell\) adjoined, so
\begin{equation}\label{eq:muprime}
\mu'=(r_1+s,\ldots,r_\ell+s).
\end{equation}
Order the one-row components from \(\beta\) by weakly increasing
length, the order among equal lengths being fixed but arbitrary; the
\(s\) new rows form a second block, and we use both orders of the two
blocks.  Let \(u_p\) be the number of entries equal to \(1\) in
\(T_p\), so that \(T_p\) has \(u_p\) ones followed by twos.  Counting
the rows of each block that reach column \(i\), set
\[
a_i:=\#\{p:u_p\geq i\}\ \text{(adjoined block)},
\qquad
z_i:=\#\{p:u_p\geq i\}\ \text{(\(\beta\)-block)}.
\]
The \(\beta\)-block has \(r_i\) rows of length at least \(i\), and every
adjoined row has length \(\ell\).  Hence \((\boldsymbol a,\boldsymbol z)\)
satisfies \eqref{eq:domain}, and the total number of ones is
\begin{equation}\label{eq:Mlevels}
M=\sum_pu_p=\sum_{i=1}^{\ell}(a_i+z_i).
\end{equation}

Fix \((\boldsymbol a,\boldsymbol z)\).  At level \(i\) the \(a_i\)
continuing rows are chosen from the \(a_{i-1}\) rows that reached level
\(i-1\), and in the \(\beta\)-block the \(z_i-z_{i+1}\) rows ending
precisely at level \(i\) are chosen from the \(r_i-z_{i+1}\) rows of
length at least \(i\) that do not continue to level \(i+1\).  Applying
the binary-word interpretation
\[
\qbinom Nk=\sum_{\substack{w\in\{0,1\}^N\\\abs w=k}}
q^{\inv(w)}
\]
at every level gives statistics \(J_a,J_z\): the word for \(J_a\)
records the continuing rows in component order, that for \(J_z\) the
rows ending at the given level among the eligible ones.  By
construction,
\begin{equation}\label{eq:Jgf}
\sum_{\substack{T:\,\boldsymbol a(T)=\boldsymbol a\\
\boldsymbol z(T)=\boldsymbol z}}
q^{J_a(T)+J_z(T)}
=B_{\boldsymbol r,s}(\boldsymbol a,\boldsymbol z).
\end{equation}

We now compare \(J_a+J_z\) with the inversion statistic, the
only point where the order of the two row blocks matters.  In the
single-row specialization, an inversion is a same-column pair with a
\(2\) in the earlier component and a \(1\) in the later, or an
adjacent-column pair with a \(1\) in column \(i+1\) of the earlier
component and a \(2\) in column \(i\) of the later
\cite[Section~3.2]{Schilling2001}.  Write \(\inv_a(T)\) and
\(\inv_z(T)\) for the inversions internal to the adjoined and
\(\beta\)-blocks.

\begin{lemma}\label{lem:internal}
With the notation above, we have
\begin{equation}\label{eq:internal}
\inv_a(T)+J_a(T)=\sum_{i=1}^{\ell}a_i(s-a_i),
\qquad
\inv_z(T)+J_z(T)=\sum_{i=1}^{\ell}z_i(r_i-z_i).
\end{equation}
In both blocks the count is pairwise: two rows contribute to
\(\inv+J\) once for every level, at most the length of the shorter of
the two, that is reached by exactly one of their two strings of
\(1\)'s.
\end{lemma}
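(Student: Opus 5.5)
The plan is to prove both identities by a pairwise decomposition, which is what the second sentence of Lemma~\ref{lem:internal} asserts.

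\emph{Decomposing the two sides.} Within one block, $\inv$ is by definition a sum over ordered pairs of components $p<p'$. Expanding the $J$ statistic at each level into pair contributions shows it is also such a sum. At level $i$ the word for $J_a$ lists the $a_{i-1}$ rows that reached level $i-1$, in component order, with a $1$ for each row that continues. Its inversions are the pairs $p<p'$, both with $u\geq i-1$, such that $p$ continues to level $i$ ($u_p\geq i$) and $p'$ stops ($u_{p'}=i-1$). The same holds in the $\beta$-block for the rows ending exactly at level $i$ among the eligible rows. On the right-hand side, $a_i(s-a_i)$ counts the pairs of adjoined rows exactly one of which has $u\geq i$. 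Likewise $z_i(r_i-z_i)$ counts the pairs of $\beta$-rows, both of length at least $i$, exactly one of which has $u\geq i$. Every such pair has both lengths at least $i$, which accounts for the phrase ``at most the length of the shorter''. It therefore suffices to prove, for a fixed pair $p<p'$ with $u_p=x$, $u_{p'}=y$ and shorter length $L$, that
\[
\inv(p,p')+J(p,p')=\#\{1\le i\le L:\ \min(x,y)<i\le\max(x,y)\}=\min(\max(x,y),L)-\min(x,y).
\]

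\emph{Case analysis.} Take first the adjoined block, where all lengths equal $\ell$. If $x=y$, Schilling's rule gives no inversion. A same-column $2$-over-$1$ needs $u_p<c\le u_{p'}$, and an adjacent $1$ at column $c+1$ of $p$ over a $2$ at column $c$ of $p'$ needs $u_{p'}<c<u_p$. Neither is possible, and $J$ contributes nothing either, so both sides vanish. If $x<y$, the same-column inversions are the columns $x<c\le y$, giving $y-x$. There are no adjacent-type inversions, and $J$ contributes nothing since $p$ stops before $p'$. This matches the right-hand side. If $x>y$, there are no same-column inversions. The adjacent-type inversions are the columns $y<c<x$, giving $x-y-1$. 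The level $i=y+1$ contributes $1$ to $J$, because $p$ continues past level $y$ while $p'$ stops there. The total is $x-y$.

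\emph{Main obstacle: unequal lengths.} In the $\beta$-block the rows have unequal lengths $\lambda_p\le\lambda_{p'}$, since the components are ordered by weakly increasing length. Here the tableau inversions must be truncated at column $L=\lambda_p$. I must also check the eligibility rule for $J_z$: at level $i$ the eligible rows are those of length at least $i$ that do not continue to level $i+1$. This must produce exactly one extra unit when $x>y$ and $y<L$. It must produce nothing in the boundary cases $y=L$ or $x=L$, where the shorter row is entirely $1$'s. I would verify these edge cases directly. Summing the pairwise identity over all pairs then gives \eqref{eq:internal}.
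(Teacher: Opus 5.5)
Your reduction to a single pair $p<p'$ and your case analysis for the adjoined block follow the paper's argument, and they are correct. The gap is the $\beta$-block, which is the second identity in \eqref{eq:internal}. You defer it (``I would verify these edge cases directly''), and the target you set for that check is wrong. You require $J_z$ to contribute nothing when $x=L$. But when $x=L>y$ there are no same-column inversions and only $x-1-y$ adjacent ones. So $J_z$ must contribute exactly one unit, or the pair total is $x-y-1$ rather than $x-y$. This also contradicts your own requirement of one extra unit whenever $x>y$. The error seems to come from assuming $J_z$ behaves like $J_a$ and supplies level $y+1$. It does not.

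The missing observation is that the factors $\qbinom{r_i-z_{i+1}}{z_i-z_{i+1}}$ are built from the top. At level $i$ the word runs over the rows of length at least $i$ with $u\le i$, with letter $1$ exactly when $u=i$. Hence a pair $p<p'$ contributes to $J_z$ only at level $i=x=u_p$, and only if $y<x$. When $y<x$ it does contribute, because both rows are eligible at level $x$: $x\le\lambda_p\le\lambda_{p'}$. This is where the weakly increasing order is used, and it is what the paper means by ``the earlier row is no longer than the later.'' So $J_z$ supplies the missing boundary level at the top, $i=x$, whatever the lengths.

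The inversions are as follows. There are $\max(0,\min(y,L)-x)$ same-column ones, in columns $x<c\le\min(y,L)$. There are $\max(0,x-1-y)$ adjacent ones, in columns $y<c\le x-1$, all present in both rows since $x\le L\le\lambda_{p'}$. The pair total is $\min(y,L)-x$ if $x<y$, $x-y$ if $x>y$, and $0$ if $x=y$. Since $x\le L$, this equals $\min(\max(x,y),L)-\min(x,y)$. The only truncation occurs when $y>L$, and it affects only the same-column count. With this, summing over pairs gives $\sum_i z_i(r_i-z_i)$, as you intended.
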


\begin{proof}
The claim is pairwise.  Fix two rows in their prescribed component
order, with \(u\) and \(v\) ones.  Up to the length of the shorter row,
same-column inversions contribute once at each level strictly above
\(u\) and at most \(v\), and adjacent-column inversions once at each
level strictly above \(v+1\) and at most \(u\).  The binary-word
statistic contributes the one missing boundary level if \(u>v\), and
nothing if \(u\leq v\).  Since the earlier row is no longer than the
later, \(\inv+J\) therefore contributes once for every level, at most
the length of the shorter row, reached by exactly one of the two strings
of \(1\)'s.  Summing over pairs gives \(\sum_i a_i(s-a_i)\) in the
adjoined block.  At level \(i\) only the \(r_i\) rows of the
\(\beta\)-block of length at least \(i\) are eligible, and exactly
\(z_i\) of them reach it; the same argument gives
\(\sum_i z_i(r_i-z_i)\).  This proves \eqref{eq:internal}.
\end{proof}

\begin{example}
Take two rows with \(u=3\) and \(v=1\) ones in this component order
(Figure~\ref{fig:multitableau}).  The only inversion is
adjacent-column, so \(\inv=1\).  At level \(2\) the word of continuing
rows is \(10\), giving \(J=1\); at level \(3\) only the earlier row is
eligible.  Hence \(\inv+J=2\), once for each of the levels \(2,3\)
reached by exactly one of the two strings, as in
Lemma~\ref{lem:internal}.
\end{example}

\begin{figure}[H]
\centering
\begin{tikzpicture}[line join=round,
  cell/.style={draw, minimum size=0.72cm, anchor=south west,
    font=\small, inner sep=0pt},
  one/.style={cell, fill=onefill},
  two/.style={cell, fill=twofill}]
\def\cs{0.72}
% ---- LEFT: multitableau, r=(3,1), s=2, a=(2,1), z=(3,1) ----
% s-block: rows u=2, u=1
\node[one] at (0*\cs,5*\cs) {$1$}; \node[one] at (1*\cs,5*\cs) {$1$};
\node[one] at (0*\cs,4*\cs) {$1$}; \node[two] at (1*\cs,4*\cs) {$2$};
\node[left,font=\small] at (-0.25,4.75*\cs) {$s$-block};
% beta-block: row lengths 1,1,2
\node[one] at (0*\cs,2.2*\cs) {$1$};
\node[one] at (0*\cs,1.2*\cs) {$1$};
\node[one] at (0*\cs,0.2*\cs) {$1$}; \node[one] at (1*\cs,0.2*\cs) {$1$};
\node[left,font=\small] at (-0.25,1.4*\cs) {$\beta$-block};
\node[right,font=\small] at (2.1,4.7*\cs) {$a_1=2,\ a_2=1$};
\node[right,font=\small] at (2.1,1.1*\cs) {$z_1=3,\ z_2=1$};
% ---- RIGHT: inversion rule ----
\begin{scope}[xshift=7.6cm]
\node[font=\small] at (1.9*\cs,5.6*\cs) {inversion rule};
\node[font=\footnotesize] at (1.9*\cs,5.0*\cs) {(earlier $u=3$, later $v=1$)};
\node[one] at (0*\cs,3*\cs) {$1$};\node[one] at (1*\cs,3*\cs) {$1$};
  \node[one] at (2*\cs,3*\cs) {$1$};\node[two] at (3*\cs,3*\cs) {$2$};
\node[left,font=\footnotesize] at (-0.15,3.5*\cs) {earlier};
\node[one] at (0*\cs,1.6*\cs) {$1$};\node[two] at (1*\cs,1.6*\cs) {$2$};
  \node[two] at (2*\cs,1.6*\cs) {$2$};\node[two] at (3*\cs,1.6*\cs) {$2$};
\node[left,font=\footnotesize] at (-0.15,2.1*\cs) {later};
% arrow: earlier col. 3 ("1") down to later col. 2 ("2")
\draw[red,thick,->] (2.5*\cs,3.0*\cs) -- (1.5*\cs,2.6*\cs);
\node[right,font=\footnotesize,red] at (3*\cs+0.15,1.0*\cs) {$\inv=1$};
\end{scope}
\end{tikzpicture}
\caption{A multitableau for \(\ell=2\), \(\boldsymbol r=(3,1)\),
\(s=2\); \(1\)'s are blue, \(2\)'s tan.  Right: the sole inversion of
the pair shown is adjacent-column (red arrow).}
\label{fig:multitableau}
\end{figure}

If the \(\beta\)-block precedes the \(s\)-block, the cross-block
inversions are
\begin{equation}\label{eq:cross-minus}
C^-:=\sum_{i=1}^{\ell}
\bigl((r_i-z_i)a_i+z_{i+1}(s-a_i)\bigr).
\end{equation}
Indeed, column \(i\) holds \(r_i-z_i\) entries \(2\) in the earlier
\(\beta\)-block and \(a_i\) entries \(1\) in the later block, while the
adjacent-column count pairs \(z_{i+1}\) earlier entries \(1\) in column
\(i+1\) with \(s-a_i\) later entries \(2\) in column \(i\); these are
the two displayed terms.  Reindexing the second terms gives
\[
C^-=\sum_{i=1}^{\ell}
\bigl(r_i a_i+(s-a_{i-1}-a_i)z_i\bigr).
\]
By Lemma~\ref{lem:internal}, \eqref{eq:Mlevels} and
\eqref{eq:cross-minus}, a term-by-term simplification gives
\begin{align}
sM-\inv(T)
&=J_a(T)+J_z(T)+\sum_{i=1}^{\ell}
\bigl(a_i^2+z_i^2+a_i z_i+z_i a_{i-1}
-r_i(a_i+z_i)\bigr)\notag\\
&=J_a(T)+J_z(T)+E^-(\boldsymbol a,\boldsymbol z).
\label{eq:inv-Eminus}
\end{align}

If the \(s\)-block precedes the \(\beta\)-block, the cross-block count
is instead
\begin{displaymath}
C^+:=\sum_{i=1}^{\ell}
\bigl((s-a_i)z_i+(r_i-z_i)a_{i+1}\bigr).
\end{displaymath}
The first term counts same-column pairs with an earlier \(2\) and a
later \(1\), the second adjacent-column pairs with an earlier \(1\) in
column \(i+1\) and a later \(2\) in column \(i\).  By
Lemma~\ref{lem:internal}, we get
\begin{equation}\label{eq:inv-Eplus}
sM-\inv(T)
=J_a(T)+J_z(T)+E^+(\boldsymbol a,\boldsymbol z).
\end{equation}

\begin{proposition}[Left-hand supernomial identity]
\label{prop:left-supernomial}
For either sign, we have
\begin{equation}\label{eq:left-supernomial}
\sum_{\substack{\boldsymbol a,\boldsymbol z\\
\sum_i(a_i+z_i)=M}}
q^{E^\pm(\boldsymbol a,\boldsymbol z)}
B_{\boldsymbol r,s}(\boldsymbol a,\boldsymbol z)
=q^{sM}\widetilde S_{(M,|\mu|-M),\mu}(q^{-1}).
\end{equation}
By content symmetry, the supernomial on the right may equally be
written with the two parts of its lower index interchanged, as
\(\widetilde S_{(|\mu|-M,\,M),\,\mu}(q^{-1})\).
Equivalently, in terms of the \defn{coinversion} statistic
\begin{equation}\label{eq:coinv-def}
\coinv(T):=n(\mu)-\inv(T),
\qquad
n(\mu)=\sum_{i=1}^{\ell}\binom{r_i+s}{2},
\end{equation}
we have
\begin{equation}\label{eq:left-coinv}
\sum_{\substack{\boldsymbol a,\boldsymbol z\\
\sum_i(a_i+z_i)=M}}
q^{E^\pm(\boldsymbol a,\boldsymbol z)}
B_{\boldsymbol r,s}(\boldsymbol a,\boldsymbol z)
=q^{\,sM-n(\mu)}\sum_T q^{\coinv(T)}
=q^{\,sM-n(\mu)}\,S_{(M,|\mu|-M),\mu}(q),
\end{equation}
the sum over \(T\) as in
\eqref{eq:tableau-supernomial}, those with \(M\) entries equal
to \(1\).  Here \(\coinv(T)\geq0\), and
\(S_{(M,|\mu|-M),\mu}(q)\) is the unnormalized supernomial
\eqref{eq:supernomial-def}.
\end{proposition}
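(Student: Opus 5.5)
The plan is to combine the two displayed identities \eqref{eq:inv-Eminus} and \eqref{eq:inv-Eplus} with the generating-function identity \eqref{eq:Jgf} and Schilling's Proposition~\ref{prop:tableau-supernomial}.

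\textbf{Step 1: the tableau set.} I first check that the map \(T\mapsto(\boldsymbol a(T),\boldsymbol z(T))\), from tuples of one-row \(\{1,2\}\)-tableaux with row lengths the parts of \(\mu\) and exactly \(M\) ones, lands in the domain \eqref{eq:domain}, with \(\sum_i(a_i+z_i)=M\) by \eqref{eq:Mlevels}. For each admissible \((\boldsymbol a,\boldsymbol z)\), the fibre is exactly the set enumerated in \eqref{eq:Jgf}. Summing \eqref{eq:Jgf} over all \((\boldsymbol a,\boldsymbol z)\) with \(\sum_i(a_i+z_i)=M\), after multiplying each fibre by \(q^{E^\pm(\boldsymbol a,\boldsymbol z)}\), turns the left side of \eqref{eq:left-supernomial} into
\[
\sum_{T}q^{J_a(T)+J_z(T)+E^\pm(\boldsymbol a(T),\boldsymbol z(T))}.
\]

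\textbf{Step 2: rewrite the exponent.} By \eqref{eq:inv-Eminus}, when the \(\beta\)-block is placed first, this exponent equals \(sM-\inv(T)\). By \eqref{eq:inv-Eplus}, when the \(s\)-block is placed first, it also equals \(sM-\inv(T)\), computed with respect to that order. In either case the sum becomes \(q^{sM}\sum_T q^{-\inv(T)}\).

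\textbf{Step 3: apply Schilling.} Proposition~\ref{prop:tableau-supernomial} applies to \emph{every} ordering of the parts of \(\mu\), so it covers both block orders. It gives \(\sum_T q^{\inv(T)}=\widetilde S_{(M,|\mu|-M),\mu}(q)\) with the content written as \(M\) ones and \(|\mu|-M\) twos. Substituting \(q\mapsto q^{-1}\) yields \eqref{eq:left-supernomial}, and the alternative form follows from content symmetry.

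\textbf{Step 4: the coinversion form.} Unwinding \eqref{eq:supernomial-tilde}, \(\widetilde S(q^{-1})=q^{-n(\mu)}S(q)\). The first equality in \eqref{eq:left-coinv} is then just \(-\inv=\coinv-n(\mu)\), and the second is the unwinding above. For \(n(\mu)\), I compute \(\sum_{p<p'}\min(\mu_p,\mu_{p'})=\sum_j\binom{\mu'_j}{2}\) and use \eqref{eq:muprime}, which gives \(\sum_{i=1}^{\ell}\binom{r_i+s}{2}\). Nonnegativity of \(\coinv\) follows because \(S_{\lambda\mu}(q)\) has nonnegative coefficients, and \(\widetilde S\) has degree at most \(n(\mu)\). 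More directly, \(\inv(T)\le n(\mu)\), since each pair of rows contributes at most the length of the shorter row, as in Lemma~\ref{lem:internal}.

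\textbf{Main obstacle.} The genuine content is already in \eqref{eq:inv-Eminus} and \eqref{eq:inv-Eplus}, whose derivations are sketched above the statement. The step needing care is confirming Step 1: the correspondence between tableaux and pairs (level-word data, \((\boldsymbol a,\boldsymbol z)\)) must be a bijection, so that \eqref{eq:Jgf} holds on the nose. The key point is that in the \(\beta\)-block the rows have varying lengths, but rows of equal length are ordered arbitrarily and the words select among eligible rows. Beyond this, I would double-check the cross-block counts \(C^\pm\) at the boundary levels \(i=\ell\) (using \(a_{\ell+1}=z_{\ell+1}=0\)) and \(i=1\) (using \(a_0=s\)).
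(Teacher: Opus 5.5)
Your proposal is correct and follows the paper's proof essentially verbatim: combine \eqref{eq:Jgf} with the exponent identities \eqref{eq:inv-Eminus}--\eqref{eq:inv-Eplus} for the two block orders, use the order-independence in Proposition~\ref{prop:tableau-supernomial} to get both signs, and unwind \eqref{eq:supernomial-tilde} for the coinversion form. Your direct bound \(\inv(T)\leq n(\mu)\) via pairwise row contributions is the argument of Remark~\ref{rem:normalizer}; in your first justification, what the degree bound actually needs is that \(S_{\lambda\mu}(q)\) is a polynomial, not that its coefficients are nonnegative.
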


Claims \eqref{eq:left-supernomial} and \eqref{eq:left-coinv}
for \emph{both} signs need order-independence, not content
symmetry: \(E^{\pm}\) arise from the two block orders, and the
sums agree because the multitableau generating function does not
see that order (Remark~\ref{rem:order-vs-content}).  Content
symmetry enters later to match Schilling's index convention.

\begin{remark}[Why \(n(\mu)\) and not \(sM\)]
\label{rem:normalizer}
The exponent \(sM-\inv(T)\) of
\eqref{eq:inv-Eminus}--\eqref{eq:inv-Eplus} can be negative, so it is
\(\coinv\) that carries the combinatorial content: at \(\ell=1\),
\(\boldsymbol r=(2)\), \(s=0\), \(M=1\), where \(s=0\) makes \(sM\)
vanish while \(\inv_z\) does not, the fixed-boundary sum is
\(1+q^{-1}\).  By contrast \(\inv(T)\leq n(\mu)\) always, since two
one-row components of lengths \(L,L'\) contribute at most
\(\min(L,L')\) inversions; summing by columns gives
\(n(\mu)=\sum_i\binom{\mu'_i}{2}\) as in \eqref{eq:coinv-def}, so
\(\coinv\geq0\).  The monomial \(q^{\,sM-n(\mu)}\) in
\eqref{eq:left-coinv} may still have negative exponent, and is removed
on reassembly (see Remark~\ref{rem:laurent}).
\end{remark}

\begin{proof}[Proof of Proposition~\ref{prop:left-supernomial}]
Combine \eqref{eq:Jgf}, the expansion
\eqref{eq:tableau-supernomial} (Proposition~\ref{prop:tableau-supernomial}),
and the exponent identities \eqref{eq:inv-Eminus} and
\eqref{eq:inv-Eplus}, which rest on
Lemma~\ref{lem:internal}.  The ordering with the \(\beta\)-block
before the adjoined \(s\)-block yields \(E^-\) via
\eqref{eq:inv-Eminus}; the reverse ordering yields \(E^+\) via
\eqref{eq:inv-Eplus}, and
Proposition~\ref{prop:tableau-supernomial} equates the two.  The content-symmetry restatement is
\eqref{eq:supernomial-tilde}.  For \eqref{eq:left-coinv}, the
definition \eqref{eq:supernomial-tilde} gives
\(\widetilde S_{\lambda\mu}(q^{-1})=q^{-n(\mu)}S_{\lambda\mu}(q)\),
so the right side of \eqref{eq:left-supernomial} is
\(q^{\,sM-n(\mu)}S_{\lambda\mu}(q)\); and by
\eqref{eq:tableau-supernomial},
\[
\sum_Tq^{\coinv(T)}
=q^{n(\mu)}\sum_Tq^{-\inv(T)}
=q^{n(\mu)}\widetilde S_{\lambda\mu}(q^{-1})
=S_{\lambda\mu}(q),
\]
with \(\lambda=(M,|\mu|-M)\).  Nonnegativity of the coefficients of
\(S_{\lambda\mu}(q)\) follows from
\eqref{eq:supernomial-def}, the Kostka numbers being nonnegative
integers and the Kostka--Foulkes polynomials having nonnegative
integer coefficients by the charge formula of
Lascoux--Sch\"utzenberger (see, e.g., \cite[Ch.~III]{Macdonald}).
\end{proof}

\subsection{The rigged-configuration expansion}
\label{subsec:rigged}

Apply Proposition~\ref{prop:SchillingA1} (equation
\eqref{eq:SchillingA1}) with \(c_i=r_i+s\), as in \eqref{eq:muprime}.  Then we have
$T_i=r_i-r_{i+1}+m_{i+1}$ for $i<\ell$, and
$T_\ell=r_\ell+s$. 
We also have
\[
\Phi(\boldsymbol m)
=\sum_{i=1}^{\ell-1}(r_{i+1}+s-m_{i+1})m_i.
\]
The elementary reciprocity formula
\begin{equation}\label{eq:qreciprocity}
\qbinomQ{N}{j}{q^{-1}}
=q^{-j(N-j)}\qbinom Nj
\end{equation}
now gives
\begin{align*}
q^{sM}\widetilde S_{(|\mu|-M,M),\mu}(q^{-1})
&=\sum_{\substack{\boldsymbol m\geq0\\\sum_i m_i=M}}
q^{sM-\Phi(\boldsymbol m)
-\sum_i m_i(T_i-m_i)}
\prod_{i=1}^{\ell}\qbinom{T_i}{m_i}.
\end{align*}
For \(i<\ell\), we have
\[
(r_{i+1}+s-m_{i+1})m_i
+m_i(T_i-m_i)=m_i(r_i+s-m_i),
\]
and the same equality for \(i=\ell\) is simply
\(m_\ell(T_\ell-m_\ell)=m_\ell(r_\ell+s-m_\ell)\).
Since \(M=\sum_i m_i\), the exponent reduces exactly to
\[
sM-\sum_{i=1}^{\ell}m_i(r_i+s-m_i)
=\sum_{i=1}^{\ell}(m_i^2-r_i m_i).
\]
\begin{proof}[Proof of Theorem~\ref{thm:master}]
Proposition~\ref{prop:left-supernomial} and the computation just made
give the fixed-\(M\) assertion; summing over \(M\) gives
\eqref{eq:master}.
\end{proof}

\subsection{Proof of the fixed-boundary identities}
\label{subsec:proof-fixed}

The minus and plus cases amount to the two orders of the same tableau
blocks.  We now combine the transformation with
Lemma~\ref{lem:Q-reindex}.

\begin{proof}[Proof of Theorem~\ref{thm:fixed-boundary}]
For \(b=3k-1\), the first reindexing of Lemma~\ref{lem:Q-reindex} gives
\(\ell=k-1\), \(s=r_k\) and Gaussian product \eqref{eq:Baz}, so by
\eqref{eq:Qminus} the left side of \eqref{eq:sum-minus} equals
\[
q^{\sum_{i=1}^{k}r_i^2}
\sum_{\boldsymbol a,\boldsymbol z}
q^{E^-(\boldsymbol a,\boldsymbol z)}
B_{\boldsymbol r,r_k}(\boldsymbol a,\boldsymbol z).
\]
Here Theorem~\ref{thm:master} has final Gaussian polynomial
\(\qbinom{r_{k-1}+r_k}{m_{k-1}}\), and multiplying by the displayed
constant \(q\)-power gives \eqref{eq:sum-minus}.  For \(b=3k+1\), the
second reindexing of Lemma~\ref{lem:Q-reindex} gives \(\ell=k\),
\(s=r_k\) and \eqref{eq:Qplus}. Theorem~\ref{thm:master} with \(E^+\)
has final Gaussian polynomial
\(\qbinom{r_k+s}{m_k}=\qbinom{2r_k}{m_k}\), and multiplying by
\(q^{\sum_i r_i^2}\) gives \eqref{eq:sum-plus}.
\end{proof}

\subsection{The fixed-boundary identities to the product}
\label{subsec:product}

It remains to restore the boundary summation.  The next lemma checks,
factor by factor, that the Gaussian weights above reconstruct the
Pochhammer quotient defining \(Z_{3,b}\).  Write
\(S_b(\boldsymbol r)\) for the left side of \eqref{eq:sum-plus} or
\eqref{eq:sum-minus}.

\begin{lemma}[The HJO series]\label{lem:reassemble}
For \(b=3k\pm1\), we have
\begin{equation}\label{eq:reassemble}
Z_{3,b}(q)=
\sum_{r_1\geq\cdots\geq r_k\geq0}
\frac{1}{(q)_{r_1}}
\left(\prod_{i=1}^{k-1}\qbinom{r_i}{r_{i+1}}\right)
S_b(\boldsymbol r).
\end{equation}
\end{lemma}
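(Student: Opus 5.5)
The identity is a regrouping of the defining sum \eqref{eq:inf-sum-def}. We split the sum over \(\boldsymbol n\in\Z^{\Gap}\cap\mathcal C_{3,b}\) according to the boundary chain \(r_j=n_{2b-3j}\), \(1\le j\le k\), and then check factor by factor that the Pochhammer quotient in \eqref{eq:inf-sum-def} equals
\[
\frac{1}{(q)_{r_1}}\prod_{i=1}^{k-1}\qbinom{r_i}{r_{i+1}}
\]
times the Gaussian product appearing in \(S_b(\boldsymbol r)\). Since \(Q_{3,b}\) is the same exponent on both sides, only the Pochhammer weights need comparing.

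\emph{Step 1: sort the gaps.} Using \eqref{eq:gaps-minus}--\eqref{eq:gaps-plus}, split \(\Gap_{3,b}\) into the boundary indices \(R_j=2b-3j\) and the free indices \(A_i,Z_i\) of Lemma~\ref{lem:Q-reindex}. For each \(i\in\Gap\) we record which of \(i-3\), \(i-b\), \(i-b-3\) lie in \(\Gap\); those that do not are nonpositive or in \(\Gamma\), so \(n=0\) there. The long progression contains the \(R_j\) together with the lower part of the same residue class; that lower part is the \(a\)-chain in case \(b=3k+1\) and the \(a\)- and \(z\)-indices arranged as in the lemma otherwise. The key fact is that \(i-b\) and \(i-b-3\) are gaps only for \(i=R_j\) (or a neighbour of it), since subtracting \(b\) moves an index from the upper part of the long progression into the short one.

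\emph{Step 2: telescope along chains.}
\begin{itemize}
\item[(a)] The factors \(1/(q)_{n_i-n_{i-3}}\) along the long progression telescope into \(1/(q)_{r_1}\) times \(\prod_i (q)_{r_i}/\bigl((q)_{r_{i+1}}(q)_{r_i-r_{i+1}}\bigr)\), which gives \(\frac{1}{(q)_{r_1}}\prod\qbinom{r_i}{r_{i+1}}\). There is a leftover factor \((q)_{r_k}\) at the junction with the free chain; for example, \(a_0=s=r_k\).
\item[(b)] That leftover \((q)_{r_k}\), combined with \(1/(q)_{a_{i-1}-a_i}\) down the \(a\)-chain, produces \(\prod\qbinom{a_{i-1}}{a_i}\). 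The final denominator \((q)_{a_\ell}\) is absorbed by the \(-b\) differences \((q)_{n_i-n_{i-b}}\), which link \(a\)-indices to \(R\)-indices.
\item[(c)] The remaining factors are the numerators \((q)_{n_i-n_{i-b-3}}\) and the denominators \((q)_{n_{R_i}-n_{R_i-b}}\), \((q)_{z_i-z_{i+1}}\). Pairing them should give \(\qbinom{r_i-z_{i+1}}{z_i-z_{i+1}}\), i.e. \((q)_{r_i-z_{i+1}}\) over \((q)_{z_i-z_{i+1}}(q)_{r_i-z_i}\), where \(r_i-z_{i+1}\) comes from the \((b+3)\)-difference and \(r_i-z_i\) from the \(b\)-difference.
\end{itemize}

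\emph{Step 3: check the summation domains agree.} The cone conditions of \(\mathcal C_{3,b}\) must match \(r_1\ge\dots\ge r_k\ge0\) together with \eqref{eq:domain}. Every cone inequality either appears there or follows from it, and conversely terms outside \eqref{eq:domain} vanish by the Gaussian convention \eqref{eq:gaussian-def}. This is where the finiteness remark before Theorem~\ref{thm:fixed-boundary} is used. Absolute convergence, from Huang's bound, justifies the rearrangement.

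The main obstacle is the bookkeeping in Step 2 at the ends of the chains: \(i=1,2\), the top index \(f\), and the junction \(R_k\) against \(a_0\). There the convention \(n_t=0\) off \(\Gap\) must be applied carefully, and the two residue cases differ. I would do \(b=4\) and \(b=5\) by hand, and check them against the \(b=10\) example, before writing the general index-by-index table in the style of Lemma~\ref{lem:Q-reindex}.
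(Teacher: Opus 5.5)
Your plan is the paper's proof. You split \eqref{eq:inf-sum-def} by the boundary chain and compare the Pochhammer quotient with the fixed-chain factor times $B_{\boldsymbol r,s}$, using a table of the predecessors $g-3$, $g-b$, $g-b-3$. You then check that $\mathcal C_{3,b}$ is exactly $r_1\geq\cdots\geq r_k\geq0$ together with \eqref{eq:domain}. That table is the whole content of the lemma, so two of your structural claims must be corrected before it comes out right.

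\emph{Which progression holds the $z$-chain.} In both classes the long progression is the $R$-chain followed by the $a$-chain, glued by $a_0=r_k$, and the $z$-chain is the short progression. For $b=3k-1$, both $A_i=3(k-i)-2$ and $R_j=6k-2-3j$ are $\equiv1\pmod 3$, while $Z_i=3(k-i)-1\equiv2$. Your Step~1 places $z$-indices in the long progression in this case, which is wrong.

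\emph{Step 2(b) misidentifies the $b$-differences.} No $b$-difference links $a$-indices to $R$-indices. Since $A_i-b<0$ and $Z_i-b<0$, at every $g=A_i$ or $Z_i$ the factors $(q)_{n_g-n_{g-b-3}}$ and $(q)_{n_g-n_{g-b}}$ both equal $(q)_{n_g}$ and cancel. The only $b$-links are $R_j-b=Z_j$ and $R_j-b-3=Z_{j+1}$, as in your (c). The denominator $(q)_{a_\ell}$ of $\prod_i\qbinom{a_{i-1}}{a_i}$ is simply the factor $1/(q)_{n_g-n_{g-3}}$ at the bottom index $g=A_\ell\in\{1,2\}$, whose predecessor lies off $\Gap$.

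\emph{The junction you flagged.} For $b=3k-1$ we have $R_k-b=-1$. So the $R_k$-factor is just $1/(q)_{r_k-a_1}$, and the $z$-product stops at $\ell=k-1$. For $b=3k+1$ we have $R_k-b=1=Z_k$, so $R_k$ contributes $(q)_{r_k}/\bigl((q)_{r_k-a_1}(q)_{r_k-z_k}\bigr)$. Combined with the factor $1/(q)_{z_k}$ at $Z_k=1$, this supplies the $i=k$ factor $\qbinom{r_k}{z_k}$ of $B_{\boldsymbol r,s}$, since $z_{k+1}=0$. In both cases your leftover $(q)_{r_k}$ from (a) is the numerator $(q)_{a_0}$ of $\qbinom{a_0}{a_1}$.

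With these corrections, your (a) and (c) give the paper's factor-by-factor identity: the Pochhammer quotient equals $\frac{1}{(q)_{r_1}}\prod_{i=1}^{k-1}\qbinom{r_i}{r_{i+1}}\,B_{\boldsymbol r,s}(\boldsymbol a,\boldsymbol z)$.
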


\begin{proof}
Partition the sum \eqref{eq:inf-sum-def} according to the values
\eqref{eq:fixed-r}.  The nonzero Gaussian conditions in
\eqref{eq:domain}, together with
\(r_1\geq\cdots\geq r_k\geq0\), are exactly the inequalities
defining \(\mathcal C_{3,b}\) in these coordinates.  We verify the
Pochhammer factors explicitly.
Write \(\Pi_{3,b}(\boldsymbol n)\) for the Pochhammer product in the
summand of \(Z_{3,b}\) (not to be confused with the infinite product
\(P_{3,b}(q)\) of \eqref{eq:Pab}), so that
\[
\Pi_{3,b}(\boldsymbol n):=
\prod_{g\in\Gap_{3,b}}
\frac{(q)_{n_g-n_{g-3-b}}}
{(q)_{n_g-n_{g-3}}(q)_{n_g-n_{g-b}}}.
\]
For \(b=3k-1\), use the notation of
Lemma~\ref{lem:Q-reindex}, put \(\ell=k-1\), and set
\(a_{\ell+1}=z_{\ell+1}=0\).  The predecessors of each type of gap
variable are
\[
\begin{array}{|c|ccc|}
\hline
n_g&n_{g-3}&n_{g-b}&n_{g-b-3}\\ \hline
a_i&a_{i+1}&0&0\\
z_i&z_{i+1}&0&0\\
r_i\ (i\leq\ell)&r_{i+1}&z_i&z_{i+1}\\
r_k=s&a_1&0&0 .\\ \hline
\end{array}
\]
Substitution into the definition of \(\Pi_{3,b}\) gives
\begin{align}
\Pi_{3,3k-1}(\boldsymbol n)=
\prod_{i=1}^{\ell}
\frac{(q)_{r_i-z_{i+1}}}
{(q)_{r_i-r_{i+1}}(q)_{r_i-z_i}}
\cdot\frac{1}{(q)_{s-a_1}}
\cdot
\prod_{i=1}^{\ell}
\frac{1}
{(q)_{a_i-a_{i+1}}(q)_{z_i-z_{i+1}}}.
\label{eq:Poch-minus}
\end{align}

For \(b=3k+1\), put \(a_{k+1}=z_{k+1}=0\).  The corresponding
table is
\[
\begin{array}{|c|ccc|}
\hline
n_g&n_{g-3}&n_{g-b}&n_{g-b-3}\\ \hline
a_i&a_{i+1}&0&0\\
z_i&z_{i+1}&0&0\\
r_i\ (i<k)&r_{i+1}&z_i&z_{i+1}\\
r_k=s&a_1&z_k&0 .\\ \hline
\end{array}
\]
It follows that
\begin{align}
\Pi_{3,3k+1}(\boldsymbol n)
=
\prod_{i=1}^{k-1}
\frac{(q)_{r_i-z_{i+1}}}
{(q)_{r_i-r_{i+1}}(q)_{r_i-z_i}}
\cdot
\frac{(q)_{r_k}}
{(q)_{r_k-a_1}(q)_{r_k-z_k}}
\cdot
\prod_{i=1}^{k}
\frac{1}
{(q)_{a_i-a_{i+1}}(q)_{z_i-z_{i+1}}}.
\label{eq:Poch-plus}
\end{align}

It remains to compare these formulas with the Gaussian products.
Expanding \eqref{eq:Baz} gives
\begin{align*}
B_{\boldsymbol r,s}(\boldsymbol a,\boldsymbol z)
=\prod_{i=1}^{\ell}
\frac{(q)_{a_{i-1}}}
{(q)_{a_i}(q)_{a_{i-1}-a_i}}\cdot
\prod_{i=1}^{\ell}
\frac{(q)_{r_i-z_{i+1}}}
{(q)_{z_i-z_{i+1}}(q)_{r_i-z_i}}.
\end{align*}
The fixed-chain factor is
\[
\frac{1}{(q)_{r_1}}
\prod_{i=1}^{k-1}\qbinom{r_i}{r_{i+1}}
=\frac{1}
{(q)_{r_k}\prod_{i=1}^{k-1}(q)_{r_i-r_{i+1}}},
\]
where the right-hand side follows because the numerators
\((q)_{r_i}\) of the Gaussian polynomials telescope against the
denominators \((q)_{r_{i+1}}\), leaving only \(1/(q)_{r_1}\) times
\((q)_{r_1}\) at the top and the terminal \(1/(q)_{r_k}\).  Multiply
this fixed-chain factor by \(B_{\boldsymbol r,s}\).  The
\(\boldsymbol a\)-part of \(B_{\boldsymbol r,s}\) telescopes in the
same way. Namely, with \(a_0=s=r_k\), we have
\[
\prod_{i=1}^{\ell}
\frac{(q)_{a_{i-1}}}{(q)_{a_i}(q)_{a_{i-1}-a_i}}
=\frac{(q)_{a_0}}{(q)_{a_\ell}\prod_{i=1}^{\ell}(q)_{a_{i-1}-a_i}}
=\frac{(q)_{r_k}}{(q)_{a_\ell}\prod_{i=1}^{\ell}(q)_{a_{i-1}-a_i}},
\]
since each interior \((q)_{a_i}\) appears once in a numerator and
once in a denominator.  For instance, the denominator \((q)_{a_1}\) in the
\(i=1\) term is cancelled by the numerator \((q)_{a_1}\) in the \(i=2\)
term, and so on down the chain.  In the
minus case, we have \(\ell=k-1\) and \(a_\ell=a_{k-1}\). The surviving
numerator \((q)_{a_0}=(q)_{r_k}\) cancels the terminal
\(1/(q)_{r_k}\) of the fixed-chain factor, and what remains is
exactly \eqref{eq:Poch-minus}.  In the plus case \(\ell=k\), the
boundary predecessor of \(r_k=s\) additionally contributes
\(z_k\) (see the last row of the plus table), so the terminal
numerator \((q)_{r_k}\) is displayed rather than cancelled, giving
\eqref{eq:Poch-plus}.  Therefore, we have
\[
\Pi_{3,b}(\boldsymbol n)
=\frac{1}{(q)_{r_1}}
\left(\prod_{i=1}^{k-1}\qbinom{r_i}{r_{i+1}}\right)
B_{\boldsymbol r,s}(\boldsymbol a,\boldsymbol z).
\]
The exponent is \(Q_{3,b}(\boldsymbol n)\) in both expressions.
Summing first over the free variables and then over the fixed
variables proves \eqref{eq:reassemble}.
\end{proof}

With the reassembly in hand, we can dispose of the tableau model
announced in the introduction.

\begin{proof}[Proof of Corollary~\ref{cor:tableau-model}]
By Lemma~\ref{lem:Q-reindex} and the proof of
Theorem~\ref{thm:fixed-boundary}, in both congruence classes
\[
S_b(\boldsymbol r)=
q^{\sum_{i=1}^{k}r_i^2}
\sum_{\boldsymbol a,\boldsymbol z}
q^{E^{\pm}(\boldsymbol a,\boldsymbol z)}
B_{\boldsymbol r,s}(\boldsymbol a,\boldsymbol z),
\]
with \((\ell,s)\) as in the corollary.  Splitting the inner sum
according to \(M=\sum_i(a_i+z_i)\) and applying
\eqref{eq:left-coinv} of Proposition~\ref{prop:left-supernomial}
gives the two equivalent forms of that identity, with
\(\mu'=(r_1+s,\ldots,r_\ell+s)\) as in \eqref{eq:muprime}.
Substituting the coinversion form into \eqref{eq:reassemble} of
Lemma~\ref{lem:reassemble} gives \(Z_{3,b}(q)\).
\end{proof}

The product evaluation uses the theta functions \eqref{eq:theta-def}
of Section~\ref{sec:schilling-warnaar}.

\begin{proof}[Proof of Theorem~\ref{thm:main}]
We prove (2) first.
 Insert Theorem~\ref{thm:fixed-boundary}
into \eqref{eq:reassemble}.  If \(b=3k-1\), the result is
\begin{align*}
\sum_{\substack{r_1,\ldots,r_k\geq0\\
m_1,\ldots,m_{k-1}\geq0}}
\frac{q^{\,r_k^2+
\sum_{i=1}^{k-1}(r_i^2-r_i m_i+m_i^2)}}{(q)_{r_1}}
\prod_{i=1}^{k-1}\qbinom{r_i}{r_{i+1}}
\cdot
\qbinom{r_{k-1}+r_k}{m_{k-1}}
\prod_{i=1}^{k-2}
\qbinom{r_i-r_{i+1}+m_{i+1}}{m_i}.
\end{align*}
This is exactly the left-hand side of Warnaar's identity
\eqref{eq:WarnaarMinus}.  Hence the sum equals
\begin{equation}\label{eq:prod-minus}
\frac{(q^{3k+2};q^{3k+2})_\infty^2}{(q;q)_\infty^2}
\theta(q^k,q^{k+1},q^{k+1};q^{3k+2}).
\end{equation}

If \(b=3k+1\), substitution gives
\begin{align*}
\sum_{\substack{r_1,\ldots,r_k\geq0\\
m_1,\ldots,m_k\geq0}}
\frac{q^{\sum_{i=1}^{k}(r_i^2-r_i m_i+m_i^2)}}
{(q)_{r_1}}
\left(\prod_{i=1}^{k-1}\qbinom{r_i}{r_{i+1}}\right)
\qbinom{2r_k}{m_k}\cdot
\prod_{i=1}^{k-1}
\qbinom{r_i-r_{i+1}+m_{i+1}}{m_i}.
\end{align*}
This is the left-hand side of Warnaar's identity
\eqref{eq:WarnaarPlus} (with \(n_i=r_i\)). The number and range of
both families of variables agree, and \(k\geq1\) holds, including at
the endpoint \(b=4\).  Hence it equals
\begin{equation}\label{eq:prod-plus}
\frac{(q^{3k+4};q^{3k+4})_\infty^2}{(q;q)_\infty^2}\cdot
\theta(q^{k+1},q^{k+1},q^{k+2};q^{3k+4}).
\end{equation}

Now we identify the products.  Put \(L=b+3\) and
specialize \eqref{eq:charge} to \(a=3\).  Since \(b>3\), we
have \(\min\{a,b\}=3\), so we only have the truncation by \(a\), and
we find that
\[
r_{3,b}(i)=
\begin{cases}
0,&3i\equiv\pm1\pmod L,\\
1,&3i\equiv\pm2\pmod L,\\
2,&\text{otherwise},
\end{cases}
\qquad r_{3,b}(0)=0.
\]
If \(b=3k-1\), so \(L=3k+2\), then we have
$3(k+1)\equiv1\pmod L$ and $3k\equiv-2\pmod L$.
Thus, the charge is \(0\) on
\(0,\pm(k+1)\), is \(1\) on \(\pm k\), and is \(2\) elsewhere.
If \(b=3k+1\), so \(L=3k+4\), then
$3(k+1)\equiv-1\pmod L$ and $3(k+2)\equiv2\pmod L$,
so the charge is \(0\) on \(0,\pm(k+1)\), is \(1\) on
\(\pm(k+2)\), and otherwise is \(2\).

Now \(1/(q;q)_\infty^2\) supplies exponent \(2\) in every residue class,
the numerator \((q^L;q^L)_\infty^2\) cancels it in the zero class, and
each \(\theta(q^d;q^L)\) lowers by one the exponents in the two distinct
classes \(\pm d\).  No \(d\) here is self-paired: \(d\in\{k,k+1\}\) with
\(L=3k+2\) in the minus case and \(d\in\{k+1,k+2\}\) with \(L=3k+4\) in
the plus case, and neither equals \(L/2\).  Hence \(P_{3,3k-1}(q)\) is
\eqref{eq:prod-minus} and \(P_{3,3k+1}(q)\) is \eqref{eq:prod-plus}.  By
\eqref{eq:inf-sum-def} and \eqref{eq:Pab}, the two sides of
the display in part (2) are \(Z_{3,b}(q)\) and \(P_{3,b}(q)\), proving
part (2).

Part (1) is now part (2) together with
\(P_{3,b}^{\mathrm{pt}}(q)=Z_{3,b}(q)\), which is \eqref{eq:Ppt} with
\(a=3\).  By \eqref{eq:prod-minus} and \eqref{eq:prod-plus} the common
value equals \((q^L;q^L)_\infty^2/(q;q)_\infty^2\) times three theta
functions \(\theta(q^d;q^L)\) of \eqref{eq:theta-def} with \(L=b+3\), a
finite product of Jacobi theta quotients of level \(L\); each factor is,
up to a rational power of \(q\), a modular unit with character of level \(L\) in the
sense of \cite[Chapters~2--3]{KubertLang}.  This is part (3).
\end{proof}

\end{document}